\newtheorem{thm}{Theorem}[section]
\newtheorem{lem}[thm]{Lemma}
\theoremstyle{definition}
\newtheorem{defn}[thm]{Definition}
\theoremstyle{remark}
\theoremstyle{example}
\newtheorem{exm}[thm]{Example}
\numberwithin{equation}{section}
\begin{document}
\title{Enveloping algebras of color hom-Lie algebras}

\author[1]{A. R. Armakan}
\author[2]{S. Silvestrov\thanks{Corresponding author: E-mail: sergei.silvestrov@mdh.se}}
\author[1]{M. R. Farhangdoost}

\affil[1]{Department of Mathematics, College of Sciences, Shiraz University, P.O. Box 71457-44776, Shiraz, Iran.}
\affil[2]{Division of Applied Mathematics,
School of Education, Culture and Communication,
 M\"{a}lardalen University, Box 883, 72123 V{\"a}ster{\aa}s, Sweden.}
\date{}

\maketitle
\abstract{In this paper the universal enveloping algebra of color hom-Lie algebras is studied. A construction of the free involutive hom-associative color algebra on a hom-module is described and applied to obtain the universal enveloping algebra of an involutive hom-Lie color algebra. Finally, the construction is applied to obtain the
well-known Poincar{\'e}-Birkhoff-Witt theorem for Lie algebras to the enveloping algebra of an involutive color hom-Lie algebra.}
\\

\textbf{Keywords:}Color hom-Lie Algebras, Enveloping Algebras.

\textbf{M.S.C. 2010:}17B75, 17B99, 17B35, 17B37, 17D99, 16W10, 16W55.

\section{Introduction} \label{sec:Intro}
The investigations of various quantum deformations (or $q$-deformations) of Lie algebras began a period of rapid expansion in 1980's stimulated by introduction of quantum groups motivated by applications to the quantum Yang-Baxter equation, quantum inverse scattering methods and constructions of the quantum deformations of universal enveloping algebras of semi-simple Lie algebras.
Since then several other versions of $q$-deformed Lie algebras have appeared, especially in physical contexts such as string theory, vertex models in conformal field theory, quantum mechanics and quantum field theory in the context of deformations of infinite-dimensional algebras, primarily the Heisenberg algebras, oscillator algebras and Witt and Virasoro algebras
\cite{AizawaSaito,ChaiElinPop,ChaiIsLukPopPresn,ChaiKuLuk,ChaiPopPres,CurtrZachos1,DamKu,
DaskaloyannisGendefVir,Hu,Kassel92,LiuKQuantumCentExt,LiuKQCharQuantWittAlg,LiuKQPhDthesis}. In these pioneering works it has been in particular descovered that in these $q$-deformations of Witt and Visaroro algebras and some related algebras, some interesting $q$-deformations of Jacobi identities, extanding Jacobi identity for Lie algebras, are satisfied. This has been one of the initial motivations for the development of general quasi-deformations and discretizations of Lie algebras of vector fields using more general $\sigma$-derivations (twisted derivations) in \cite{HLS}, and introduction of abstract quasi-Lie algebras and subclasses of quasi-Hom-Lie algebras and Hom-Lie algebras as well as their general colored (graded) counterparts in \cite{HLS,LS1,LSGradedquasiLiealg,Czech:witt,LS2}. These generalized Lie algebra structures with (graded) twisted  skew-symmetry and twisted Jacobi conditions by linear maps are tailored to encompass within the same algebraic framework such quasi-deformations and discretizations of Lie algebras of vector fields using $\sigma$-derivations, describing general descritizations and deformations of derivations with twisted Leibniz rule, and the well-known generalizations of Lie algebras such as color Lie algebras which are the natural generalizations of Lie algebras and Lie superalgebras.

Quasi-Lie algebras are non-associative algebras for which the skew-symmetry and the Jacobi identity are twisted by several deforming twisting maps and also the Jacobi identity in quasi-Lie and quasi-Hom-Lie algebras in general contains six twisted triple bracket terms.
Hom-Lie algebras is a special class of quasi-Lie algebras with the bilinear product satisfying the non-twisted skew-symmetry property as in Lie algebras, whereas the Jacobi identity contains three terms twisted by a single linear map, reducing to the Jacobi identity for ordinary Lie algebras when the linear twisting map is the identity map. Subsequently, hom-Lie admissible algebras have been considered in \cite{MS} where also the hom-associative algebras have been introduced and shown to be hom-Lie admissible natural generalizations of associative algebras corresponding to hom-Lie algebras. In \cite{MS}, moreover several other interesting classes of hom-Lie admissible algebras generalising some non-associative algebras, as well as examples of finite-dimentional hom-Lie algebras have been described. Since these pioneering works \cite{HLS,LS1,LSGradedquasiLiealg,LS2,LS3,MS}, hom-algebra structures have become a popular area with increasing number of publications in various directions.

Hom-Lie algebras, hom-Lie superalgebras and hom-Lie color algebras are important special classes of color ($\Gamma$-graded) quasi-Lie algebras introduced first by Larsson and Silvestrov in \cite{LSGradedquasiLiealg,LS2}. Hom-Lie algebras and hom-Lie superalgebras have been studied further
in different aspects by Makhlouf, Silvestrov, Sheng, Ammar, Yau and other authors \cite{SB,SC,SR,MS,HomDeform,HomHopf,HomAlgHomCoalg,YN,BM,MAK,DER,
Yau:EnvLieAlg,Yau:HomolHomLie,Yau:HomBial,LarssonSigSilvJGLTA2008, RichardSilvestrovJA2008, RichardSilvestrovGLTbdSpringer2009, SigSilvGLTbdSpringer2009, SilvestrovParadigmQLieQhomLie2007},
and hom-Lie color algebras have been considered for example in \cite{YN,CCH,spl,COH}.
In \cite{AmmarMakhloufHomLieSupAlg2010}, the constructions of Hom-Lie and quasi-hom Lie algebras based on twisted discretizations of vector fields \cite{HLS} and Hom-Lie admissible algebras have been extended to Hom-Lie superalgebras, a subclass of graded quasi-Lie algebras \cite{LS2,LSGradedquasiLiealg}.
We also wish to mention that $\mathbb{Z}_3$-graded generalizations of supersymmetry, $\mathbb{Z}_3$-graded algebras, ternary structures and related algebraic models
for classifications of elementary particles and unification problems for interactions, quantum gravity and non-commutative gauge theories \cite{Kerner7,Kerner,Kerner2,Kerner4,Kerner6} also provide interesting examples related to Hom-associative algebras, graded Hom-Lie algebras, twisted differential calculi and $n$-ary Hom-algebra structures. It would be a project of grate interest to extend and apply all the constructions and results in the present paper in the relevant contexts of the articles \cite{Kerner7,AmmarMabroukMakhloufCohomnaryHNLalg2011,AmmarMakhloufHomLieSupAlg2010, ArnlindMakhloufSilvTernHomNambuJMP2010,ArnlindMakhloufSilvnaryHomLieNambuJMP2011,ArnlindKituoniMakhloufSilv3aryCohom,
Kerner,Kerner2,Kerner4,LS2,LSGradedquasiLiealg,MS}.

An important direction with many fundamental open problems in the theory of (color) quasi-Lie algebras and in particular (color) quasi-hom-Lie algebras and (color) hom-Lie algebras is the development of comprehensive fundamental theory, explicit constructions, examples and algorithms for enveloping algebraic structures, expanding the corresponding more developed fundamental theory and constructions for enveloping algebras of Lie algebras, Lie superalgebras and general color Lie algebras \cite{BahturinMikhPetrZaicevIDLSbk92,KharchenkoQLTbook2015,MikhZolotykhCALSbk95,
ScheunertCOH2,ScheunertGLA,ScheunertGTC,ScheunertZHA,ChenPetitOystaeyenCOHCHLA,PiontkovskiSilvestrovC3dCLA}. Several authors have tried to construct the enveloping algebras of hom-Lie algebras. For instance, Yau has constructed the enveloping hom-associative algebra $U_{H \textit{Lie}}(L)$ of a hom-Lie algebra $L$ in \cite{Yau:EnvLieAlg} as the left adjoint functor of $H \textit{Lie}$ using combinatorial objects of weighted binary trees, i.e., planar binary trees in which the internal vertices are equipped with weights of non-negative integers. This is analogous to the fact that the functor $\textit{Lie}$ admits a left adjoint $U$, the enveloping algebra functor. He also introduced construction of the counterpart functors $H \textit{Lie}$ and $U_{H \textit{Lie}}$ for hom-Leibniz algebras. In the article \cite{HeMaSiUnAlHomAss}, for hom-associative algebras and hom-Lie algebras, the envelopment problem, operads, and the Diamond Lemma and Hilbert series for the hom-associative operad and free algebra have been studied. Recently, making use of free involutive hom-associative algebras, the authors in \cite{GuoZhZheUEPBWHLieA} have found an explicit constructive way to obtain the universal enveloping algebras of hom-Lie algebras, in order to prove the Poincare Birkhoff Witt theorem.

In this paper we will give a brief review of well-known facts about hom-Lie algebras and their enveloping algebras. We will then present some new results, in the hope that they may eventually have a bearing on representation theory of color hom-Lie algebras. In Section \ref{sec:basics HomLie}, some necessary notions and definitions are presented as an introduction to color hom-Lie algebras. In Section \ref{deff} the notion of universal enveloping algebras of color hom-Lie algebras is given and several useful result about involutive color hom-Lie algebras are proven. In Section \ref{sec:PBWinvhomLie}, we prove the analogous of the well known Poincare-Birkhoff-Witt theorem for color hom-Lie algebras, using the definitions and results of Sections \ref{sec:basics HomLie} and \ref{deff}. Finally, due to the importance of hom-Lie superalgebras, we present the most important results of the paper in hom-Lie superalgebras case.

\section{Basic conseps on hom-Lie algebras and color quasi-Lie algebras} \label{sec:basics HomLie}
We start by recalling some basic concepts from \cite{HomAlgHomCoalg,MS,GuoZhZheUEPBWHLieA}.
We use $\mathbf{k}$ to denote a commutative unital ring (for example a field).
\begin{defn}
\begin{itemize}
\item[(i)] A hom-module is a pair $(M,\alpha)$ consisting of an $\mathbf{k}$-module $M$ and a linear operator $\alpha:M\rightarrow M$.

\item[(ii)] A hom-associative algebra is a triple $(A,.,\alpha)$ consisting of an $\mathbf{k}$-module $A$, a linear map $\cdot:A\otimes A\rightarrow A$ called the multiplication and a multiplicative linear operator $\alpha:A\rightarrow A$ which satisfies the hom-associativity condition, namely
$$\alpha(x)\cdot(y\cdot z)=(x\cdot y)\cdot\alpha(z),$$
for all $x,y,z\in A$.

\item[(iii)] A hom-associative algebra or a hom-module is called involutive if $\alpha^{2}=id$.

\item[(iv)] Let $(M,\alpha)$ and $(N,\beta)$ be two hom-modules. An $\mathcal{k}$-linear map $f:M\rightarrow N$ is called a morphism of hom-modules if
$$f(\alpha(x))=\beta(f(x)),$$
for all $x\in M$.

\item[(v)] Let $(A,\cdot,\alpha)$ and $(B,\bullet,\beta)$ be two hom-associative algebras. A $\mathbf{k}$-linear map $f:A\rightarrow B$ is called a morphism of hom-associative algebras if
\begin{itemize}
\item[(1)] $f(x\cdot y)=f(x)\bullet f(y),$
\item[(2)] $f(\alpha(x))=\beta(f(x)),$
for all $x,y\in A$.
\end{itemize}

\item[(vi)] If $(A,\cdot,\alpha)$ is a hom-associative algebra, then $B\subseteq A$ is called a hom-associative subalgebra of $A$ if it is closed under the multiplication $\cdot$ and $\alpha(B)\subseteq B$. A submodule $I$ is called a hom-ideal of A if $x\cdot y \in I$ and $x\cdot y \in I$ for all $x\in I$ and $y \in A$, and also $\alpha(I)\subseteq I$.

\end{itemize}
\end{defn}
One can find various examples of hom-associative algebras and their properties in \cite{HomAlgHomCoalg,MS}.

\begin{defn}\label{FIHAA}
Let $(M,\alpha)$ be an involutive hom-module. A free involutive hom-associative algebra on $M$ is an involutive hom associative algebra $(F_{M},\star,\beta)$ together with a morphism of hom-modules $j:M\rightarrow F_{M}$ with the property that for any involutive hom-associative algebra $A$ together with a morphism $f:M\rightarrow A$ of hom-modules, there is a unique morphism $\bar{f}:F_{M}\rightarrow A$ of hom-associative algebras such that $f=\bar{f}\circ j$.
\end{defn}

Our next goal is to recall the definition of an involutive hom-associative algebra on an involutive hom-module $(M,\alpha)$ from \cite{GuoZhZheUEPBWHLieA}, which is known as the hom-tensor algebra and is denoted here by $T(M)$. Note that as an $R$-module, $T(M)$ is the same as the tensor algebra, i.e.

$$T(M)=\bigoplus_{i\geq1}M^{\otimes i},$$
on which we have the following multiplication in order to obtain a hom-associative algebra. First, the linear map $\alpha$ on $M$ is extended to a linear map $\alpha_{T}$ on $M^{\otimes i}$ by the tensor multiplicativity, i.e.
$$\alpha_{T}(x)=\alpha_{T}(x_{1}\otimes\cdots\otimes x_{i}):= \alpha(x_{1})\otimes\cdots \otimes \alpha(x_{i}),$$
for all pure tensors $x:=x_{1}\otimes\cdots\otimes x_{i}\in M^{\otimes i}$, $i\geq1$. One can see that $\alpha_{T}$ has the following properties:
\begin{itemize}
\item[(i)] $\alpha_{T}(x\otimes y)=\alpha_{T}(x)\otimes \alpha_{T}(y)$, for all $x\in M^{\otimes i}$ , $y\in M^{\otimes j}$.
\item[(ii)] $\alpha_{T}^{2}=id$.
\end{itemize}
Now, the binary operation on $T(M)$ is defined as follows:

\begin{equation}
x\odot y:=\alpha_{T}^{i-1}(x)\otimes y_{1}\otimes \cdots \alpha_{T}(y_{2}\otimes \cdots \otimes y_{i}),
\end{equation}
for all $x\in M^{\otimes i}$ and $y\in M^{\otimes j}$.

\begin{thm}\cite{GuoZhZheUEPBWHLieA}\label{sakht}
Let $(M,\alpha)$ be an involutive hom-module. Then
\begin{itemize}
  \item [(i)] The triple $T(M):=(T(M),\odot,\alpha_{T})$ is an involutive hom-associative algebra.
  \item [(ii)] The quadruple $(T(M),\odot,\alpha_{T},i_{M})$ is the free involutive hom-associative algebra on $M$.
\end{itemize}
\end{thm}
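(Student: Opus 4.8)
Part (i) is a direct verification of the three defining properties of an involutive hom-associative algebra for $(T(M),\odot,\alpha_T)$, which I would carry out as follows. The operator $\alpha_T$ is a well-defined $\mathbf{k}$-linear map on each $M^{\otimes i}$, hence on $T(M)$, and the two listed properties $\alpha_T(x\otimes y)=\alpha_T(x)\otimes\alpha_T(y)$ and $\alpha_T^2=\id$ are immediate from $\alpha^2=\id$. Multiplicativity, $\alpha_T(x\odot y)=\alpha_T(x)\odot\alpha_T(y)$, follows by applying $\alpha_T$ to the defining expression for $x\odot y$ and using that $\alpha_T$ commutes with $\otimes$ and with its own powers. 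The one real computation is hom-associativity,
$$\alpha_T(x)\odot(y\odot z)=(x\odot y)\odot\alpha_T(z),$$
for pure tensors $x\in M^{\otimes p}$, $y\in M^{\otimes q}$, $z\in M^{\otimes r}$. I would expand both sides by two nested applications of the definition of $\odot$, line up the resulting strings of tensor factors, and compare the exponent of $\alpha_T$ attached to each factor; the two sides differ only by shifts of these exponents by $\pm 2$, which disappear because $\alpha_T^2=\id$. This parity bookkeeping is the technical core of (i), and it is exactly the point where involutivity of $(M,\alpha)$ is essential.

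For (ii), take $i_M\colon M=M^{\otimes 1}\hookrightarrow T(M)$ the canonical inclusion; since $\alpha_T$ restricts to $\alpha$ on $M^{\otimes 1}$, $i_M$ is a morphism of hom-modules. Let $(A,\cdot,\beta)$ be an involutive hom-associative algebra and $f\colon M\to A$ a morphism of hom-modules, so $f\circ\alpha=\beta\circ f$. Writing a pure tensor $x_1\otimes\cdots\otimes x_n$ canonically as an iterated $\odot$-product of length-one tensors, I would define $\bar f$ by replacing $\odot$ with $\cdot$, $\alpha_T$ with $\beta$, and each $x_k$ with $f(x_k)$, and extend linearly; concretely this should give the left-nested product
$$\bar f(x_1\otimes\cdots\otimes x_n)=\bigl(\cdots\bigl((f(x_1)\cdot f(x_2))\cdot\beta(f(x_3))\bigr)\cdots\bigr)\cdot\beta^{\,n-2}(f(x_n)),$$
with $\bar f(x_1)=f(x_1)$. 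Then $\bar f\circ i_M=f$ holds by construction; $\bar f\circ\alpha_T=\beta\circ\bar f$ follows from $f\circ\alpha=\beta\circ f$ together with multiplicativity of $\beta$; and $\bar f(x\odot y)=\bar f(x)\cdot\bar f(y)$ is checked on pure tensors by repeated use of hom-associativity and involutivity of $(A,\beta)$, i.e. the same $\pm 2$ exponent cancellation as in (i). For uniqueness, note that $i_M(M)=M^{\otimes 1}$ generates $T(M)$ as a hom-associative algebra: from $M^{\otimes 1}$ one gets all of $M^{\otimes 2}$ via $\odot$ and $\alpha_T$ (the latter invertible on $M^{\otimes 2}$), and then all $M^{\otimes n}$ by induction on $n$; hence any hom-associative algebra morphism $g\colon T(M)\to A$ with $g\circ i_M=f$ agrees with $\bar f$ on a generating set, so $g=\bar f$.

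The main obstacle is fixing the $\beta$-twist pattern in the definition of $\bar f$ so that the resulting map is at once compatible with the twisting maps and multiplicative for $\odot$; this is where the precise shape of the formula defining $\odot$ enters, and where one must see that the exponents of $\alpha_T$ occurring in $\odot$ translate, under $\beta^2=\id$, into a consistent recipe on $A$. Once the canonical form of a pure tensor under $\odot$ is pinned down, every remaining verification reduces to linear bookkeeping of $\alpha_T$- and $\beta$-powers modulo $2$, which the involutivity hypotheses keep finite.
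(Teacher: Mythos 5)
The paper itself gives no proof of this theorem --- it is quoted from \cite{GuoZhZheUEPBWHLieA} --- so your proposal can only be measured against the construction in that source. Your part (i) is fine: multiplicativity of $\alpha_T$ with respect to $\odot$ and hom-associativity are indeed checked by expanding both sides and cancelling $\alpha_T$-exponents modulo $2$, and involutivity of $(M,\alpha)$ is exactly what makes this work.

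In part (ii), however, your explicit formula for $\bar f$ is wrong, and this is a genuine gap rather than a cosmetic slip, because your multiplicativity and uniqueness checks are run against that formula. With the product intended in the paper (the displayed definition has a typo, but the later identity $\mathfrak{g}^{\otimes r}\odot \mathfrak{g}^{\otimes s}=\alpha_{T}^{s-1}(\mathfrak{g}^{\otimes r})\otimes \mathfrak{g}\otimes \alpha_{T}(\mathfrak{g}^{\otimes (s-1)})$ fixes it), one has $u\odot v=\alpha_T^{\,s-1}(u)\otimes v_1\otimes\alpha_T(v_2\otimes\cdots\otimes v_s)$ for $v\in M^{\otimes s}$, so right multiplication by a length-one tensor is plain concatenation: $u\odot x=u\otimes x$. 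Hence the canonical decomposition $x_1\otimes\cdots\otimes x_n=(\cdots(x_1\odot x_2)\odot\cdots)\odot x_n$ contains no $\alpha_T$ at all, and any hom-associative algebra morphism $g$ with $g\circ i_M=f$ is forced to satisfy $g(x_1\otimes\cdots\otimes x_n)=(\cdots(f(x_1)\cdot f(x_2))\cdots)\cdot f(x_n)$, with no $\beta$-twists. Your twisted formula $(\cdots(f(x_1)f(x_2))\cdot\beta(f(x_3))\cdots)\cdot\beta^{\,n-2}(f(x_n))$ therefore cannot be the required morphism: already for $n=3$, multiplicativity demands $\bar f((x_1\otimes x_2)\odot x_3)=\bar f(x_1\otimes x_2)\cdot f(x_3)$, not $\bar f(x_1\otimes x_2)\cdot\beta(f(x_3))$, and these differ whenever $\beta\neq\id$ (take $A=T(M)$ itself). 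The repair is straightforward: define $\bar f$ as the untwisted left-nested product; then $\bar f\circ i_M=f$ and $\bar f\circ\alpha_T=\beta\circ\bar f$ are immediate, and $\bar f(u\odot v)=\bar f(u)\cdot\bar f(v)$ follows by induction on the length of $v$ from hom-associativity $\beta(a)\cdot(b\cdot c)=(a\cdot b)\cdot\beta(c)$ together with $\beta^2=\id$ --- the exponent cancellation you describe, applied at the right place. Your uniqueness argument via generation of $T(M)$ by $M^{\otimes 1}$ is correct and unaffected; with the corrected formula the argument coincides with the construction in \cite{GuoZhZheUEPBWHLieA}.
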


%%%%%%%%%%%%%%%%%%%%%%%%%%%%%%%%%%%%%%%%%%%%%%%%%%%%%%%%%%%%%%%%%%%%%%%%%%%%%%%%%%%%%%%%%%%%%%%%%%%%%%%%%%%%
It is now convenient to recall some definitions for hom-Lie algebras \cite{HLS,LS1,LS2,MS,LSGradedquasiLiealg}.
\begin{defn}
A hom-Lie algebra is a triple $(\mathfrak{g},[,], \alpha)$, where $\mathfrak{g} $ is a vector space equipped with a skew-symmetric bilinear map $[,]: \mathfrak{g} \times \mathfrak{g} \rightarrow \mathfrak{g}$ and a linear map $\alpha :\mathfrak{g} \rightarrow \mathfrak{g} $ such that
$$[\alpha(x),[y,z]]+[\alpha(y),[z,x]]+[\alpha(z),[x,y]]=0, $$
for all $x,y,z \in \mathfrak{g}$ , which is called hom-Jacobi identity.
\end{defn}

A hom-Lie algebra is called a multiplicative hom-Lie algebra if $\alpha$ is an algebraic morphism, i.e. for any $x,y\in \mathfrak{g}$,
$$\alpha([x,y])=[\alpha(x),\alpha(y)].$$

We call a hom-Lie algebra regular if $\alpha$ is an automorphism. Moreover, it is called involutive if $\alpha^{2}=id$.

A sub-vector space $\mathfrak{h}\subset \mathfrak{g}$ is a hom-Lie sub-algebra of $(\mathfrak{g},[,], \alpha)$ if $\alpha(\mathfrak{h})\subset \mathfrak{h}$ and $\mathfrak{h}$ is closed under the bracket operation, i.e.
$$[x_{1},x_{2}]_{\mathfrak{g}}\in \mathfrak{h},$$
for all $x_{1},x_{2}\in \mathfrak{h}.$
Let $(\mathfrak{g},[,], \alpha)$ be a multiplicative hom-Lie algebra. Let $\alpha^{k}$ denote the $k$-times composition of $\alpha$ by itself, for any nonnegative integer $k$, i.e.
$$\alpha^{k}=\alpha \circ ... \circ \alpha ~~~~~(k-times),$$
where we define $\alpha^{0}=Id$ and $\alpha^{1}=\alpha$. For a regular hom-Lie algebra $\mathfrak{g}$, let
$$\alpha^{-k}=\alpha^{-1} \circ ... \circ \alpha^{-1}~~~~~(k-times).$$

We now recall the notion of a hom-Lie color algebra step by step in order to indicate them as a generalization of Lie color algebras.
\begin{defn} \cite{ScheunertGLA,ScheunertGTC,BahturinMikhPetrZaicevIDLSbk92,MikhZolotykhCALSbk95,LSGradedquasiLiealg}
Given a commutative group $\Gamma$ (referred to as the grading group), a commutation factor on $\Gamma$ with values in the multiplicative group $K\setminus \{0\}$ of a field $K$ of characteristic 0 is a map
$$\varepsilon: \Gamma \times \Gamma \rightarrow K\setminus \{0\},$$

satisfying three properties:
\begin{itemize}
  \item [(i)] $\varepsilon(\alpha+\beta,\gamma)=\varepsilon(\alpha,\gamma)\varepsilon(\beta,\gamma),$
  \item [(ii)] $\varepsilon(\alpha,\gamma+\beta)=\varepsilon(\alpha,\gamma)\varepsilon(\alpha,\beta),$
  \item [(iii)] $\varepsilon(\alpha,\beta)\varepsilon(\beta,\alpha)=1.$
\end{itemize}
A $\Gamma$-graded $\varepsilon$-Lie algebra (or a Lie color algebra) is a $\Gamma$-graded linear space
$$X=\bigoplus_{\gamma\in \Gamma} X_{\gamma},$$
with a bilinear multiplication (bracket) $[.,.]:X\times X \rightarrow X$ satisfying the following properties:
\begin{itemize}
\item [(i)] \textbf{Grading axiom:} $[X_{\alpha}, X_{\beta}]\subseteq X_{\alpha+\beta},$
\item [(ii)] \textbf{Graded skew-symmetry:} $[a,b]=-\varepsilon(\alpha,\beta)[b,a],$
\item [(iii)]  \textbf{Generalized Jacobi identity:}  \\$\varepsilon(\gamma,\alpha)[a,[b,c]]+ \varepsilon(\beta,\gamma)[c,[a,b]]+ \varepsilon(\alpha,\beta)[b,[c,a]]=0,$
\end{itemize}
for all $a\in X_{\alpha}, b\in X_{\beta}, c\in X_{\gamma}$ and $\alpha,\beta,\gamma \in \Gamma$.
The elements of $X_{\gamma}$ are called homogenous of degree $\gamma$, for all $\gamma\in \Gamma$.
\end{defn}

Analogous to the other kinds of definitions of hom-algebras, the definition of a color hom-Lie algebra can be given as follows \cite{COH,LSGradedquasiLiealg,YN,CCH,spl}.

\begin{defn} \label{HLCAD}
A color hom-Lie algebra is a quadruple $(\mathfrak{g},[.,.],\varepsilon,\alpha)$ consisting of a $\Gamma$-graded vector space $\mathfrak{g}$, a bi-character $\varepsilon$, an even bilinear mapping $$[.,.]:\mathfrak{g}\times \mathfrak{g}\rightarrow \mathfrak{g},$$ (i.e. $[\mathfrak{g}_{a},\mathfrak{g}_{b}]\subseteq \mathfrak{g}_{a+b}$, for all $a,b \in \Gamma$) and an even homomorphism $\alpha:\mathfrak{g}\rightarrow \mathfrak{g}$ such that for homogeneous elements $x,y,z\in \mathfrak{g}$ we have
\begin{itemize}
  \item [1.] \textbf{$\varepsilon$-skew symmetric:} $[x,y]=-\varepsilon(x,y)[y,x],$
  \item [2.] \textbf{$\varepsilon$-Hom-Jacobi identity:} $\sum_{cyclic\{x,y,z\}}\varepsilon(z,x)[\alpha(x),[y,z]]=0.$
\end{itemize}
\end{defn}

Color hom-Lie algebras are a special class of general color quasi-Lie algebras ($\Gamma$-graded quasi-Lie algebras) defined first by Larsson and Silvestrov in \cite{LSGradedquasiLiealg}.

Let $\mathfrak{g}=\bigoplus_{\gamma\in \Gamma}\mathfrak{g}_{\gamma}$ and $\mathfrak{h}=\bigoplus_{\gamma\in \Gamma}\mathfrak{h}_{\gamma}$ be two $\Gamma$-graded color Lie algebras. A linear mapping $f:\mathfrak{g}\rightarrow \mathfrak{h}$ is said to be homogenous of the degree $\mu\in \Gamma$ if
$$f(\mathfrak{g}_{\gamma})\subseteq \mathfrak{h}_{\gamma+\mu},$$
for all $\gamma\in \Gamma$. If in addition, $f$ is homogenous of degree zero, i. e. $$f(\mathfrak{g}_{\gamma})\subseteq \mathfrak{h}_{\gamma},$$ holds for any $\gamma\in \Gamma$, then $f$ is said to be even.

Let $(\mathfrak{g},[,],\varepsilon,\alpha)$ and $(\mathfrak{g}',[,]',\varepsilon',\alpha')$ be two color hom-Lie algebras. A homomorphism of degree zero $f:\mathfrak{g}\rightarrow \mathfrak{g}'$ is said to be a morphism of color hom-Lie algebras if
\begin{itemize}
  \item [1.] $[f(x),f(y)]'=f([x,y])$, for all $x,y \in \mathfrak{g},$
  \item [2.] $f \circ \alpha=\alpha' \circ f.$
\end{itemize}

In particular, if $\alpha$ is a morphism of color Lie algebras, then we call $(\mathfrak{g},[,],\varepsilon,\alpha)$, a multiplicative color hom-Lie algebra.\\

\begin{exm} \cite{COH}
As in case of hom-associative and hom-Lie algebras,
examples of multiplicative color hom-Lie algebras can be constructed for example by the standard method of composing multiplication with algebra morphism.

Let $(\mathfrak{g},[,],\varepsilon)$ be a color Lie algebra and $\alpha$ be a color Lie algebra morphism. Then $(\mathfrak{g},[,]_{\alpha}:=\alpha\circ [.,.],\varepsilon,\alpha)$ is a multiplicative hom-Lie color algebra.
\end{exm}

\begin{defn}
A hom-associative color algebra is a triple $(V,\mu,\alpha)$ consisting of a color space $V$, an even bilinear map $\mu:V\times V\rightarrow V$ and an even homomorphism $\alpha:V\rightarrow V$ satisfying
$$\mu(\alpha(x),\mu(y,z))=\mu(\mu(x,y),\alpha(z)),$$
for all $x,y,z \in V$.
\end{defn}

A hom-associative color algebra or a hom-Lie color algebra is said to be involutive if $\alpha^{2}=id$.

As in the case of an associative algebra and a Lie algebra, a hom-associative color algebra $(V,\mu,\alpha)$ gives a hom-Lie color algebra by antisymmetrization. We denote this hom-Lie color algebra by $(A,[,]_{A},\beta_{A})$, where $\beta_{A}=\alpha$ and $[x,y]_{A}=xy-yx$, for all $x,y\in A$.

\section{The Universal Enveloping Algebra}\label{deff}
In this section we introduce the notion of the universal enveloping algebra of a hom-Lie color algebra. Moreover, we prove a new result on the free involutive hom-associative color algebra on an involutive hom-module.
\begin{defn}
Let $(V,\alpha_{V})$ be an involutive hom-module. A free involutive hom-associative color algebra on $V$ is an involutive hom-associative color algebra $(F(V),*,\alpha_{F})$ together with a morphism of hom-modules
$$j_{V}:(V,\alpha_{V})\rightarrow (F(V),\alpha_{F}),$$
 with the property that, for any involutive hom-associative color algebra $(A,.,\alpha_{A})$ together with a morphism $f:(V,\alpha_{V})\rightarrow (A,\alpha_{A})$ of hom-modules, there is a unique morphism $\overline{f}:(F(V),*,\alpha_{F})\rightarrow (A,.,\alpha_{A})$ of hom-associative color algebras such that $f=\overline{f}\circ j_{V}$.
\end{defn}

\begin{defn}\label{uni}
Let $(\mathfrak{g},[,],\alpha)$ be a hom-Lie color algebra. A universal enveloping hom-associative color algebra of $\mathfrak{g}$ is a hom associative color algebra $$U(\mathfrak{g}):=(U(\mathfrak{g}),\mu_{U},\alpha_{U}),$$ together with a morphism $\varphi_{\mathfrak{g}}:\mathfrak{g}\rightarrow U(\mathfrak{g})$ of color hom-Lie algebras such that for any hom-associative color algebra $(A,\mu,\alpha_{A})$ and any hom-Lie color algebra morphism $\phi:(\mathfrak{g},[,]_{\mathfrak{g}},\beta_{\mathfrak{g}})$, there exists a unique morphism $\bar{\phi}:U(\mathfrak{g}\rightarrow A)$ of hom associative color algebras such that $\bar{\phi}\circ \varphi_{\mathfrak{g}}=\phi$.

\end{defn}

The following lemma shows an easy way to construct the universal algebra when we have an involutive hom-Lie color algebra.

\begin{lem}\label{3tayi}
Let $(\mathfrak{g},[,]_{\mathfrak{g}},\beta_{\mathfrak{g}})$ be an inovolutive hom-Lie color algebra.
\begin{itemize}
  \item [(i)] Let $(A,\cdot,\alpha_{A})$ be a hom-associative algebra. Let $$f:(\mathfrak{g},[,]_{\mathfrak{g}}, \beta_{\mathfrak{g}})\rightarrow (A,[,]_{A},\beta_{A})$$ be a morphism of color hom-Lie algebras and let $B$ be the hom-associative subcolor algebra of $A$ generated by $f(\mathfrak{g})$. Then $B$ is involutive.
  \item [(ii)] The universal enveloping hom-associative algebra $(U(\mathfrak{g}),\varphi_{\mathfrak{g}})$ of $(\mathfrak{g},[,]_{\mathfrak{g}},\beta_{\mathfrak{g}})$ is involutive.
  \item [(iii)] In order to verify the universal property of $(U(\mathfrak{g}),\varphi_{\mathfrak{g}})$ in Definition \ref{uni}, we only need to consider involutive hom-associative algebras $A:=(A,\cdot_{A},\alpha_{A})$.
\end{itemize}
\end{lem}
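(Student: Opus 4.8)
The plan is to reduce everything to one elementary fact: if $(A,\cdot,\alpha_A)$ is a hom-associative color algebra, then the associated hom-Lie color algebra $(A,[,]_A,\beta_A)$ has twisting map $\beta_A=\alpha_A$, so any morphism of color hom-Lie algebras $f:(\mathfrak g,[,]_{\mathfrak g},\beta_{\mathfrak g})\to (A,[,]_A,\beta_A)$ satisfies $\alpha_A\circ f=f\circ\beta_{\mathfrak g}$. For (i) I would first iterate this relation: since $\beta_{\mathfrak g}^2=\id$, we get $\alpha_A^2(f(x))=f(\beta_{\mathfrak g}^2(x))=f(x)$ for all $x\in\mathfrak g$, so $\alpha_A^2$ is the identity on $f(\mathfrak g)$. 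Next I would introduce $A^{(2)}:=\{a\in A:\alpha_A^2(a)=a\}$ and check it is a hom-associative subcolor algebra of $A$: it is a graded subspace because $\alpha_A$ is even; it is stable under $\alpha_A$ since $\alpha_A^2(\alpha_A(a))=\alpha_A(\alpha_A^2(a))$; and it is closed under the product because $\alpha_A$, hence $\alpha_A^2$, is multiplicative, giving $\alpha_A^2(a\cdot b)=\alpha_A^2(a)\cdot\alpha_A^2(b)=a\cdot b$ whenever $a,b\in A^{(2)}$. Since $A^{(2)}\supseteq f(\mathfrak g)$ and $B$ is by definition the smallest hom-associative subcolor algebra containing $f(\mathfrak g)$, we conclude $B\subseteq A^{(2)}$, i.e. $\alpha_A^2|_B=\id$.

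For (ii) I would apply (i) with $A:=U(\mathfrak g)$, $\alpha_A:=\alpha_U$ and $f:=\varphi_{\mathfrak g}$: the hom-associative subcolor algebra $B$ generated by $\varphi_{\mathfrak g}(\mathfrak g)$ is then involutive, and it remains only to show $B=U(\mathfrak g)$. The corestriction of $\varphi_{\mathfrak g}$ is a morphism of color hom-Lie algebras $\varphi':\mathfrak g\to (B,[,]_B,\alpha_B)$ (where $[,]_B$ is the commutator bracket of $B$), so the universal property of $U(\mathfrak g)$ furnishes a morphism of hom-associative color algebras $\overline{\varphi'}:U(\mathfrak g)\to B$ with $\overline{\varphi'}\circ\varphi_{\mathfrak g}=\varphi'$; composing with the inclusion $\iota:B\hookrightarrow U(\mathfrak g)$ yields a hom-associative endomorphism of $U(\mathfrak g)$ that fixes $\varphi_{\mathfrak g}$, hence equals $\id_{U(\mathfrak g)}$ by the uniqueness clause of the universal property. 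Therefore $\iota$ is surjective, $B=U(\mathfrak g)$, which is accordingly involutive; I would record along the way the byproduct that $\varphi_{\mathfrak g}(\mathfrak g)$ generates $U(\mathfrak g)$ as a hom-associative color algebra, since this is needed in (iii).

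For (iii), given an arbitrary hom-associative color algebra $(A,\cdot_A,\alpha_A)$ and a color hom-Lie morphism $\phi:\mathfrak g\to (A,[,]_A,\alpha_A)$, I would let $B$ be the hom-associative subcolor algebra of $A$ generated by $\phi(\mathfrak g)$, which is involutive by (i), and corestrict $\phi$ to $\phi':\mathfrak g\to B$. Assuming the universal property of $(U(\mathfrak g),\varphi_{\mathfrak g})$ has already been verified against involutive targets, there is a unique morphism $\overline{\phi'}:U(\mathfrak g)\to B$ with $\overline{\phi'}\circ\varphi_{\mathfrak g}=\phi'$, and then $\bar\phi:=\iota\circ\overline{\phi'}$ satisfies $\bar\phi\circ\varphi_{\mathfrak g}=\phi$. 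For uniqueness, if $\psi:U(\mathfrak g)\to A$ also satisfies $\psi\circ\varphi_{\mathfrak g}=\phi$, then the image of $\psi$ is the hom-associative subcolor algebra generated by $\psi(\varphi_{\mathfrak g}(\mathfrak g))=\phi(\mathfrak g)$, which equals $B$ by the generation fact from (ii); hence $\psi$ corestricts to a map $U(\mathfrak g)\to B$ over $\varphi_{\mathfrak g}$, which coincides with $\overline{\phi'}$ by uniqueness over the involutive algebra $B$, so $\psi=\iota\circ\overline{\phi'}=\bar\phi$. I expect the only real friction to be bookkeeping rather than mathematics: one must be careful that the corestrictions are genuine morphisms in the appropriate category, and that the generation statement extracted in (ii) is actually available before it is invoked, since the uniqueness arguments in both (ii) and (iii) rest on it.
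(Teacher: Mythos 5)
Your proof is correct, and on part (i) it is the paper's argument: both introduce the fixed-point set $S=\{a\in A:\alpha_A^2(a)=a\}$ (your $A^{(2)}$), check it is a hom-associative subcolor algebra using multiplicativity of $\alpha_A$, and get $f(\mathfrak g)\subseteq S$ from $\alpha_A\circ f=f\circ\beta_{\mathfrak g}$ and $\beta_{\mathfrak g}^2=\id$; your write-up in fact completes a sentence that is left truncated in the paper's proof of (i). The divergence is in (ii) and (iii). For (ii) the paper merely asserts that $U(\mathfrak g)$ is generated by $\varphi_{\mathfrak g}(\mathfrak g)$ and cites (i); you actually prove this generation statement by the standard universal-property argument ($\iota\circ\overline{\varphi'}$ fixes $\varphi_{\mathfrak g}$, hence equals $\id_{U(\mathfrak g)}$, so $\iota$ is surjective), which makes your (ii) more complete than the paper's. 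For (iii) the paper factors the given morphism and any competitor through the involutive subalgebra $S\subseteq A$ of all $\alpha_A^2$-fixed points, using (ii) (involutivity of $U(\mathfrak g)$) to see that a competitor's image lands in $S$; you instead factor through the subalgebra $B$ generated by $\phi(\mathfrak g)$, involutive by (i), and trap the competitor's image in $B$ via the generation fact. Both variants are sound and achieve the same reduction; the paper's $S$ needs no generation argument, while your $B$ makes the existence step independent of (ii). The one caveat, which you flag yourself, applies equally to the paper: in (iii) only the involutive-target universal property is assumed, whereas your generation fact (and the paper's appeal to (ii)) is derived from the full universal property; in the intended application, the quotient $T(\mathfrak g)/I$ of Theorem \ref{I}, both generation and involutivity are immediate from the construction, so no real gap arises, but a careful statement of (iii) should either add these as hypotheses on the candidate pair or note that they hold in the case where the lemma is used.
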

\begin{proof}
\begin{itemize}
  \item [\textit{(i)}] Let
  $$S=\{x\in A|\alpha_{A}^{2}(x)=x\}.$$
  One can easily check that $S$ is a submodule. Also for $x,y\in S$, we have $xy\in S$, since $\alpha_{A}^{2}(xy)=\alpha_{A}^{2}(x)\alpha_{A}^{2}(y)=xy$. Moreover, we have
  $$\alpha_{A}^{2}(\alpha_{A}(x))=\alpha_{A} (\alpha_{A}^{2}(x))=\alpha_{A}(x)$$
   which shows that $\alpha_{A}(x)\in S$, for all $x\in S$. Thus, $S$ is a hom-associative subalgebra of $A$. Since $f$ is a morphism of
  \item [\textit{(ii)}] Since $U(\mathfrak{g})$ is generated by $\varphi_{\mathfrak{g}}(\mathfrak{g})$ as a hom associative algebra, the statement follows from $(i)$.
  \item [\textit{(iii)}] We should prove that, assuming that the universal property of $U(\mathfrak{g})$ holds for involutive hom-associative algebras, then it holds for all hom-associative algebras. Let $(A,\cdot,\alpha_{A})$ be a hom-associative algebra and let
      $$\psi: (\mathfrak{g}, [,]_{\mathfrak{g}}, \beta_{\mathfrak{g}}) \rightarrow(A,[,]_{A},\beta_{A})$$
      be a morphism of color hom-Lie algebras. Let
      $$S=\{x\in A|\alpha_{A}^{2}(x)=x\}$$
      be the involutive hom-associative subalgebra of $A$ defined in the proof of $(i)$. Since $im(\psi)$ is contained in $S$, $\psi$ is the composition of a morphism $\psi_{S}: (\mathfrak{g}, [,]_{\mathfrak{g}}, \beta_{\mathfrak{g}}) \rightarrow (S, [,]_{S}, \beta_{S})$ of hom-associative color algebras with the inclusion $i:S\rightarrow A$. By assumption, there is a morphism $\bar{\psi_{S}}:U(\mathfrak{g})\rightarrow A$ of hom-associative color algebras such that $\hat{\psi_{S}}\circ \varphi_{\mathfrak{g}}=\psi_{S}$. Then composing with the inclusion $i:B \rightarrow A$, we obtain a morphism $\bar{\psi}:U(\mathfrak{g})\rightarrow A$ of hom-associative color algebras such that $\bar{\psi} \circ \varphi_{\mathfrak{g}}= \psi$.\\
      Now, let $\bar{\psi}':U(\mathfrak{g})\rightarrow A$ be another morphism of hom-associative color algebras such that $\bar{\psi}' \circ \varphi_{\mathfrak{g}}= \psi$. By $(ii)$, $im(\bar{\psi}')$ is involutive. So $\bar{\psi}'$ is the composition of a morphism $\bar{\psi}'_{S}:U(\mathfrak{g})\rightarrow S$ with the inclusion $i: S\rightarrow A$ and $\bar{\psi}'_{S} \circ \varphi_{\mathfrak{g}}= \psi_{S}$. Since $S$ is involutive, the morphisms $\bar{\psi}'_{S}$ and $\bar{\psi}_{S}$ coincide. As a consequence, $\bar{\psi}'$ and $\bar{\psi}$ coincide which completes the proof.
\end{itemize}
\end{proof}

We can now, give the construction of the universal enveloping hom-associative color algebra of an involutive hom-Lie color algebra.

\begin{thm}\label{I}
Let $\mathfrak{g}:= (\mathfrak{g}, [,]_{\mathfrak{g}},\beta_{\mathfrak{g}})$ be an involutive hom-Lie color algebra. Let $$T(\mathfrak{g}):=(T(\mathfrak{g}),\odot,\alpha_{T})$$ be the free hom-associative algebra on the hom-module underlying $\mathfrak{g}$ obtained in Theorem \ref{sakht}. Let $I$ be the hom-ideal of $T(\mathfrak{g})$ generated by the set
\begin{equation}\label{21}
  \{a\otimes b -\varepsilon(a,b) b\otimes a -[a,b]\}
\end{equation}
and let
$$U(\mathfrak{g})=\frac{T(\mathfrak{g})}{I}$$
be the quotient hom-associative algebra. Let $\psi$ be the composition of the natural inclusion $i:\mathfrak{g}\rightarrow T(\mathfrak{g})$ with the quotient map $\pi:T(\mathfrak{g})\rightarrow U(\mathfrak{g})$. Then $(U(\mathfrak{g}),\psi)$ is a universal enveloping hom-associative algebra of $\mathfrak{g}$. Also, the universal
enveloping hom-associative algebra of $\mathfrak{g}$ is unique up to isomorphism.
\end{thm}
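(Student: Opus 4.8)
The plan is to verify the universal property directly, then deduce uniqueness by the standard abstract-nonsense argument. By Lemma \ref{3tayi}(iii) it suffices to test the universal property against involutive hom-associative color algebras, which simplifies matters considerably. First I would observe that, since $T(\mathfrak{g})$ is the free involutive hom-associative (color) algebra on the hom-module underlying $\mathfrak{g}$ (Theorem \ref{sakht}) and $I$ is a hom-ideal, the quotient $U(\mathfrak{g})=T(\mathfrak{g})/I$ inherits the structure of an involutive hom-associative color algebra, with $\alpha_U$ induced by $\alpha_T$; this is well defined precisely because $\alpha_T(I)\subseteq I$. I would then check that $\psi=\pi\circ i:\mathfrak{g}\to U(\mathfrak{g})$ is a morphism of color hom-Lie algebras: it is even and intertwines the twisting maps since $i$ and $\pi$ do, and it respects brackets because for homogeneous $a,b\in\mathfrak{g}$ the element $a\otimes b-\varepsilon(a,b)\,b\otimes a-[a,b]$ lies in $I$, so in $U(\mathfrak{g})$ we get $\psi(a)\psi(b)-\varepsilon(a,b)\psi(b)\psi(a)=\psi([a,b])$, i.e. $[\psi(a),\psi(b)]_{U}=\psi([a,b])$.

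Next comes the heart of the argument: given an (involutive) hom-associative color algebra $(A,\cdot,\alpha_A)$ and a color hom-Lie morphism $\phi:\mathfrak{g}\to(A,[,]_A,\alpha_A)$, I must produce a unique hom-associative color-algebra morphism $\bar\phi:U(\mathfrak{g})\to A$ with $\bar\phi\circ\psi=\phi$. Forgetting brackets, $\phi$ is in particular a morphism of hom-modules $\mathfrak{g}\to A$, so by the freeness in Theorem \ref{sakht}(ii) it extends uniquely to a morphism of hom-associative algebras $\Phi:T(\mathfrak{g})\to A$ with $\Phi\circ i=\phi$. The key point is that $\Phi$ kills the generators in (\ref{21}): for homogeneous $a,b\in\mathfrak{g}$,
\[
\Phi\bigl(a\otimes b-\varepsilon(a,b)\,b\otimes a-[a,b]\bigr)
=\phi(a)\cdot\phi(b)-\varepsilon(a,b)\,\phi(b)\cdot\phi(a)-\phi([a,b])
=[\phi(a),\phi(b)]_A-\phi([a,b])=0,
\]
using that $\phi$ is a color hom-Lie morphism. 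Since $\Phi$ is an algebra map compatible with $\alpha_T$ and $\alpha_A$, the two-sided hom-ideal it generates is also annihilated, so $\Phi(I)=0$ and $\Phi$ factors through a unique $\bar\phi:U(\mathfrak{g})=T(\mathfrak{g})/I\to A$ of hom-associative color algebras. Then $\bar\phi\circ\psi=\bar\phi\circ\pi\circ i=\Phi\circ i=\phi$. For uniqueness of $\bar\phi$: any such map is determined on the image of $\psi$, hence on $\pi(\mathfrak{g})$, and since $\mathfrak{g}$ generates $T(\mathfrak{g})$ as a hom-associative algebra, $\pi(\mathfrak{g})$ generates $U(\mathfrak{g})$, so two algebra morphisms agreeing there agree everywhere.

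Finally, uniqueness of $U(\mathfrak{g})$ up to isomorphism is the usual universal-property argument: if $(U',\psi')$ is another universal enveloping hom-associative color algebra, apply the universal property of $U(\mathfrak{g})$ to $\psi'$ to get $\theta:U(\mathfrak{g})\to U'$ with $\theta\circ\psi=\psi'$, and symmetrically $\theta':U'\to U(\mathfrak{g})$ with $\theta'\circ\psi'=\psi$; then $\theta'\circ\theta$ and $\mathrm{id}_{U(\mathfrak{g})}$ both satisfy the defining property of the map induced by $\psi$, hence coincide by the uniqueness clause, and likewise $\theta\circ\theta'=\mathrm{id}_{U'}$. I expect the only genuinely delicate point to be the bookkeeping around $\varepsilon$-signs and the hom-ideal structure — specifically, confirming that the hom-ideal $I$ is compatible with $\alpha_T$ (so that $U(\mathfrak{g})$ really is a hom-associative color algebra and $\Phi(I)=0$ follows from $\Phi$ vanishing on the generators), and keeping track of homogeneity so that the computation $[\phi(a),\phi(b)]_A=\phi([a,b])$ is legitimate for homogeneous $a,b$ and then extended bilinearly. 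Everything else is a formal consequence of the freeness of $T(\mathfrak{g})$ established in Theorem \ref{sakht}.
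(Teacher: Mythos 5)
Your proposal is correct and follows essentially the same route as the paper: reduce to involutive targets via Lemma \ref{3tayi}(iii), use the freeness of $T(\mathfrak{g})$ from Theorem \ref{sakht} to extend $\phi$, check that the generators $a\otimes b-\varepsilon(a,b)b\otimes a-[a,b]$ (equivalently $a\odot b-\varepsilon(a,b)b\odot a-[a,b]$) are annihilated so the map factors through $I$, and conclude uniqueness of $\bar\phi$ from the fact that $\pi(\mathfrak{g})$ generates $U(\mathfrak{g})$ (the paper phrases this as an induction on tensor degree), followed by the standard uniqueness-up-to-isomorphism argument.
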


\begin{proof}
Denote by $*$, the multiplication in $U(\mathfrak{g})$. The map $\psi$ is a morphism of hom-modules since it is the composition of two hom-module morphisms. We have
\begin{align*}
  \psi([x,y]_{\mathfrak{g}}) =&\pi([x,y]_{\mathfrak{g}}) = \pi(x\otimes y- \varepsilon(x,y) y\otimes x) \\
   =& \pi(x\odot y- \varepsilon(x,y)y\odot x)= \pi(x)*\pi(y)-\varepsilon(x,y)\pi(y)*\pi(x) \\
  = & \psi(x)*\psi(y)-\varepsilon(x,y)\psi(y)*\psi(x) =[\psi(x),\psi(y)]_{\mathfrak{g}},
\end{align*}
for all $x,y\in \mathfrak{g}$, since $x\otimes y- \varepsilon(x,y) y\otimes x -[x,y]_{\mathfrak{g}}$ is in $I=ker(\pi)$. Therefore, $\psi$ is a morphism of color hom-Lie algebras.
Now, using Lemma\ref{3tayi} $(iii)$, we consider an arbitrary involutive hom-associative color algebra $A:=(A,\cdot_{A}, \alpha_{A})$. Let $\xi:(\mathfrak{g}, [,]_{\mathfrak{g}},\beta_{\mathfrak{g}}) \rightarrow (A,[,]_{A},\beta_{A})$ be a morphism of color hom-Lie algebras. Since $T(\mathfrak{g})$ is the free involutive hom-associative color algebra on the underlying hom-module of $\mathfrak{g}$, according to Theorem \ref{sakht} there exists a hom-associative color algebra morphism $\tilde{\xi}:T(\mathfrak{g})\rightarrow A$ of hom-associative color algebras such that $\tilde{\xi}\circ i_{\mathfrak{g}}=\xi$. We have
\begin{align*}
  \tilde{\xi}(x\otimes y- \varepsilon(x,y) y\otimes x)= &  \tilde{\xi}(x\odot y- \varepsilon(x,y) y\odot x)\\=&\tilde{\xi}(x)\cdot_{A}\tilde{\xi}(y) -\varepsilon(x,y) \tilde{\xi}(y)\cdot_{A}\tilde{\xi}(x)  \\
  = &\xi(x)\cdot_{A}\xi(y) -\varepsilon(x,y)\xi(y) \cdot_{A} \xi(x) =[\xi(x),\xi(y)]_{A} \\
  = & \xi([x,y]_{A})=\tilde{\xi}([x,y]_{A}).
\end{align*}
So $I$ is contained in $ker(\tilde{\xi})$ and $\tilde{\xi}$ induces a morphism $\bar{\xi}:U(\mathfrak{g})\rightarrow A$ of hom-associative color algebras such that $\tilde{\xi}=\bar{\xi}\circ \pi$. Therefore $\bar{\xi}\circ \psi=\bar{\xi}\circ \pi\circ i=\tilde{\xi}\circ i =\xi$.

Let $\bar{\xi}':U(\mathfrak{g})\rightarrow A$ be another morphism of hom-associative color algebras such that $\bar{\xi}'\circ \psi= \xi$. We must show that $\bar{\xi}(u)=\bar{\xi}'(u)$, for all $u\in U(\mathfrak{g})$. It is sufficient to show that  $\bar{\xi}\pi(\mathfrak{a})=\bar{\xi}'\pi(\mathfrak{a})$, for $\mathfrak{a}\in \mathfrak{g}^{\otimes i}$ with $i\geq 1$, since $T(\mathfrak{g})=\bigoplus_{i\geq1}\mathfrak{g}^{\otimes i}$. We do this by using the induction on $i\geq1$. For $i=1$, we have
$$(\bar{\xi}\circ \pi)(\mathfrak{a})=(\bar{\xi}\circ \pi\circ i)(\mathfrak{a})=(\tilde{\xi}\circ i)(\mathfrak{a}) =\xi(\mathfrak{a})=(\bar{\xi}'\circ \psi)(\mathfrak{a})=(\bar{\xi}'\circ \pi)(\mathfrak{a}).$$
Now, assume that the statement holds for $i\geq1$. Let $\mathfrak{a}=\mathfrak{a}'\otimes a_{i+1}\in \mathfrak{g}^{\otimes (i+1)}$, where $\mathfrak{a}'\in \mathfrak{g}^{\otimes i}$. We have
\begin{align*}
  (\bar{\xi}\circ \pi)(\mathfrak{a})= &(\bar{\xi}\circ \pi)(\mathfrak{a}'\odot a_{i+1})=\bar{\xi}(\pi(\mathfrak{a}'))\cdot_{A} \bar{\xi}(\pi( a_{i+1})) \\
  = &  \bar{\xi}'(\pi(\mathfrak{a}'))\cdot_{A} \bar{\xi}'(\pi( a_{i+1}))=(\bar{\xi}'\circ \pi)(\mathfrak{a}'\odot a_{i+1})=(\bar{\xi}'\circ \pi)(\mathfrak{a}).
\end{align*}
Thus, the uniqueness of $\bar{\xi}$ makes $(U(\mathfrak{g}),\psi)$ a universal enveloping algebra of $\mathfrak{g}$.

The only thing left to prove is the uniqueness of $U(\mathfrak{g})$ up to isomorphism. Let $(U(\mathfrak{g})_{1},\psi_{1})$ be another universal enveloping algebra of $\mathfrak{g}$. By the definition of universal algebra, there exist homomorphisms
$$f:U(\mathfrak{g})\rightarrow U(\mathfrak{g})_{1}$$
 and
 $$f_{1}:U(\mathfrak{g})_{1}\rightarrow U(\mathfrak{g})$$
  of hom-associative color algebras such that $f\circ \psi=\psi_{1}$ and $f_{1}\circ \psi_{1}=\psi$. Therefore,
$$f_{1}\circ f\circ \psi=\psi=id_{U(\mathfrak{g})}\circ \psi .$$
Since $id$ and $f_{1}\circ f$ are both hom-associative homomorphisms, by the uniqueness in the universal property of $(U(\mathfrak{g}),\psi)$, we have $f_{1}\circ f=id_{U(\mathfrak{g})}$, and hence
$$f\circ f_{1}=id_{U(\mathfrak{g})_{1}},$$
which completes the proof.
\end{proof}

\section{The Poincare-Birkhoff-Witt Theorem} \label{sec:PBWinvhomLie}
In this section we prove a Poincare-Birkhoff-Witt like type theorem for involutive color hom-Lie algebras.

Let $\mathfrak{g}$ be a Lie color algebra with an ordered basis
$$X=\{x_{n}|n\in H\},$$
where $H$ is a well totally ordered set. Let $I$ be the ideal of the free associative algebra $T(\mathfrak{g})$ on $\mathfrak{g}$ which was given in Theorem \ref{I}, so that $U(\mathfrak{g})$ is the universal enveloping algebra of $\mathfrak{g}$. The Poincare-Birkhoff-Witt Theorem states that the linear subspace $I$ of $T(\mathfrak{g})$ has a canonical linear complement which has a basis given by
\begin{equation}\label{tfW}
W:=\{x_{n_{1}}\otimes \cdots \otimes x_{n_{i}}|n_{1}\geq \cdots \geq n_{i}, i\geq0\},
\end{equation}
called the Poincare-Birkhoff-Witt basis of $U(\mathfrak{g})$.

To simplify the notations, we denote $\bar{x}:=\beta_{\mathfrak{g}}(x)$ for $x\in \mathfrak{g}$ and $\bar{\mathfrak{x}}:=\alpha_{T}(\mathfrak{x})$ for $\mathfrak{x}\in \mathfrak{g}^{\otimes i}$, $i\geq1$. There exists a linear operator which is introduced in \cite{GuoZhZheUEPBWHLieA}.
\begin{equation}\label{teta}
\theta:\mathfrak{g}^{\otimes i} \rightarrow  \mathfrak{g}^{\otimes i},
\end{equation}
which maps every $x:=x_{1}\otimes \cdots \otimes x_{i}$ to $\tilde{x_{1}}\otimes \cdots \otimes \tilde{x_{i}}$, where
\begin{displaymath}
\tilde{x_{n}}= \left\{ \begin{array}{ll}
\bar{x_{n}} & \textrm{if $n=2k+1$ and $k\geq1$,}\\
x_{n} & \textrm{otherwise}
\end{array} \right.
\end{displaymath}

Now, we can study linear generators of the hom-ideal $I$ and express them in terms of the tensor product. Since $\beta_{\mathfrak{g}}$ is involutive, it is also bijective, so
$$\beta_{\mathfrak{g}}(\mathfrak{g})=\mathfrak{g},$$
and we have the same argument for $\theta$:
$$\theta(\mathfrak{g}^{\otimes i})=\mathfrak{g}^{\otimes i}.$$

Let us now review some properties of the linear operator $\alpha_{T}$ and the multiplication $\odot$ which has been stated in \cite{GuoZhZheUEPBWHLieA}. First we have
$$\alpha_{T}^{j}(\mathfrak{g}^{\otimes i})=\mathfrak{g}^{\otimes i}, j\geq0, i\geq0.$$
Then for any natural numbers $r,s\geq1$, we have
$$\mathfrak{g}^{\otimes r}\odot \mathfrak{g}^{\otimes s}= \alpha_{T}^{s-1} (\mathfrak{g}^{\otimes r}) \otimes \mathfrak{g} \otimes \alpha_{T} (\mathfrak{g}^{\otimes (s-1)})= \mathfrak{g}^{\otimes r+s}. $$
In the following lemma, for $\mathfrak{a}=a_{1}\otimes \cdots \otimes a_{i}\in \mathfrak{g}^{\otimes i}$ and $i\geq1$, we denote $\mathfrak{l}(\mathfrak{a})=i$.

\begin{lem}\label{4ta}
Let $\mathfrak{u}=u_{1}\otimes \cdots \otimes u_{j}\in \mathfrak{g}^{\otimes j}$, $\mathfrak{v}=v_{1}\otimes \cdots \otimes v_{k}\in \mathfrak{g}^{\otimes k}$ and $\mathfrak{w}=w_{1}\otimes \cdots \otimes w_{l}\in \mathfrak{g}^{\otimes l}$. Let $\theta$ be defined as in \eqref{teta}. Then
\begin{itemize}
  \item [(i)] $\theta(\mathfrak{u})=u_{1}\otimes u_{2}\otimes \alpha_{T}(u_{3})\otimes \alpha_{T}^{2}(u_{4}) \cdots \otimes \alpha_{T}^{j-2}(u_{j})=u_{1}\otimes \otimes_{k=2}^{j}\alpha_{T}^{k-2}(u_{k})$,
  \item [(ii)] $\theta(\alpha_{T}(\mathfrak{u}))= \alpha_{T}(\theta(\mathfrak{u}))$,
  \item [(iii)] $\theta(\mathfrak{u}\otimes \mathfrak{w})=\theta(\mathfrak{u})\otimes \alpha_{T}^{j-1}(w_{1})\otimes \cdots \otimes \alpha_{T}^{j+l-2}(w_{l}) $
  \item [(iv)] If $\mathfrak{l}(\mathfrak{v})\geq1$ and $\mathfrak{l}(\mathfrak{u})=\mathfrak{l}(\mathfrak{v})+1$, i.e. $j=k+1$, then there is $\mathfrak{c}\in \mathfrak{g}^{\otimes j}$ such that

      $$\theta(\mathfrak{u}\otimes \mathfrak{w})=\theta(\mathfrak{u})\otimes \mathfrak{c} \ \text{ and } \
      \theta(\mathfrak{v}\otimes \mathfrak{w})=\theta(\mathfrak{v})\otimes \alpha_{T}(\mathfrak{c}).$$
\end{itemize}
\end{lem}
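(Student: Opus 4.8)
The plan is to prove the four identities in turn, each by a direct computation on pure tensors using the explicit description of $\theta$ from part (i) together with the tensor-multiplicativity of $\alpha_T$ (property (i) of $\alpha_T$ recalled before Theorem~\ref{sakht}) and the involutivity $\alpha_T^2=\id$. Throughout I would fix pure tensors $\mathfrak{u}=u_1\otimes\cdots\otimes u_j$, $\mathfrak{v}=v_1\otimes\cdots\otimes v_k$, $\mathfrak{w}=w_1\otimes\cdots\otimes w_l$ and extend everything by linearity; since $\theta$ and $\alpha_T$ are linear operators this suffices.

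For part (i), I would simply unwind the definition \eqref{teta}: $\theta$ replaces the entry $x_n$ in position $n$ by $\alpha_T(x_n)=\bar{x}_n$ exactly when $n$ is odd and $n\geq 3$. Writing the exponent of $\alpha_T$ applied to $u_k$ as a function of $k$, one checks that positions $1,2$ get exponent $0$, positions $3,4$ get exponent $1$, positions $5,6$ get exponent $2$, and in general position $k$ (for $k\geq 2$) gets exponent $\lceil k/2\rceil-1$; but because $\alpha_T^2=\id$, the exponent is only well-defined mod $2$, so $\lceil k/2\rceil-1\equiv k-2 \pmod 2$ for $k\geq 2$ (indeed $\lceil k/2\rceil -1 = (k-2)/2$ or $(k-1)/2$, whose parities agree with $k-2$ after reduction). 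Hence $\alpha_T^{k-2}(u_k)$ is a legitimate rewriting of $\tilde{u}_k$ for all $k\geq 2$, giving the stated formula. Part (ii) then follows from (i) almost immediately: applying $\theta$ to $\alpha_T(\mathfrak{u})=\alpha_T(u_1)\otimes\cdots\otimes\alpha_T(u_j)$ yields $\alpha_T(u_1)\otimes\bigotimes_{k=2}^j\alpha_T^{k-2}(\alpha_T(u_k))=\alpha_T(u_1)\otimes\bigotimes_{k=2}^j\alpha_T^{k-1}(u_k)$, and by tensor-multiplicativity this equals $\alpha_T\bigl(u_1\otimes\bigotimes_{k=2}^j\alpha_T^{k-2}(u_k)\bigr)=\alpha_T(\theta(\mathfrak{u}))$.

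For part (iii), I would again use (i): $\mathfrak{u}\otimes\mathfrak{w}$ is the pure tensor whose first $j$ entries are those of $\mathfrak{u}$ and whose entry in position $j+m$ is $w_m$ for $1\leq m\leq l$. Applying (i) to this length-$(j+l)$ tensor, the first $j$ slots reproduce $\theta(\mathfrak{u})$ (the exponents $k-2$ for $k\leq j$ are unchanged), while the slot $j+m$ receives $\alpha_T^{(j+m)-2}(w_m)$, which is exactly the claimed tail $\alpha_T^{j-1}(w_1)\otimes\cdots\otimes\alpha_T^{j+l-2}(w_l)$. Part (iv) is the one requiring a little care and is the main obstacle: with $j=k+1$, part (iii) gives $\theta(\mathfrak{u}\otimes\mathfrak{w})=\theta(\mathfrak{u})\otimes\mathfrak{c}$ with $\mathfrak{c}:=\alpha_T^{j-1}(w_1)\otimes\cdots\otimes\alpha_T^{j+l-2}(w_l)\in\mathfrak{g}^{\otimes l}$ (note $\mathfrak{c}\in\mathfrak{g}^{\otimes l}$, not $\mathfrak{g}^{\otimes j}$ — I would flag this as a typo in the statement, or reinterpret "$\mathfrak{c}\in\mathfrak{g}^{\otimes j}$" as the length being determined by $\mathfrak w$), and similarly $\theta(\mathfrak{v}\otimes\mathfrak{w})=\theta(\mathfrak{v})\otimes\bigl(\alpha_T^{k-1}(w_1)\otimes\cdots\otimes\alpha_T^{k+l-2}(w_l)\bigr)$. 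Since $k=j-1$, each exponent in the second tail is one less than the corresponding exponent in $\mathfrak{c}$, so the second tail is $\alpha_T^{-1}(\mathfrak{c})=\alpha_T(\mathfrak{c})$ by involutivity. This establishes both displayed equalities with the same $\mathfrak{c}$, completing the proof. The only genuine subtlety throughout is keeping the $\alpha_T$-exponents straight modulo $2$ and correctly bookkeeping the index shift when concatenating tensors; once part (i) is in hand, the rest is essentially substitution.
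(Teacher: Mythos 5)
Your conclusions are all correct, and your route---a direct check on pure tensors using the explicit form of $\theta$, the tensor-multiplicativity of $\alpha_{T}$, and $\alpha_{T}^{2}=\id$---is exactly the ``straightforward calculations'' the paper intends; parts (ii)--(iv) are argued correctly, and your observation that $\mathfrak{c}$ should lie in $\mathfrak{g}^{\otimes l}$ rather than $\mathfrak{g}^{\otimes j}$ is a genuine typo in the statement worth flagging. The one step to repair is your justification of (i): the definition \eqref{teta} does not assign exponent $\lceil k/2\rceil-1$ to position $k$ (it applies $\alpha_{T}$ exactly once, at the odd positions $n\geq 3$, and leaves every other entry untouched), and the congruence $\lceil k/2\rceil-1\equiv k-2\pmod{2}$ you invoke is false (for $k=4$ it reads $1\equiv 0$). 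The correct one-line check is simply that, since $\alpha_{T}^{2}=\id$, the exponent $k-2$ acts as $\alpha_{T}$ for odd $k\geq 3$ and as the identity for even $k$ (and for $k=1,2$), which is precisely the pattern ``$\bar{\,\cdot\,}$ at odd positions $\geq 3$'' in \eqref{teta}; with (i) verified this way, the rest of your argument goes through unchanged.
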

\begin{proof}
Straightforward calculations.
\end{proof}

\begin{thm}
Let $\mathfrak{g}$ be an involutive hom-Lie color algebra. Let $\theta:T(\mathfrak{g})\rightarrow T(\mathfrak{g})$ be as defined in \eqref{teta}. Let $I$ be the hom-ideal of $T(\mathfrak{g})$ as defined in Theorem \ref{I}. Let
\begin{equation}\label{J1}
J=  \sum_{n,m\geq0}\sum_{\substack {\mathfrak{a}\in \mathfrak{g}^{\otimes n}\\ \mathfrak{b}\in \mathfrak{g}^{\otimes m}}}\sum_{a,b\in \mathfrak{g}}(\mathfrak{a} \otimes (a\otimes b-\varepsilon(a,b)b\otimes a)\otimes \mathfrak{b} - \alpha_{T}(\mathfrak{a})\otimes [a,b]_{\mathfrak{g}}\otimes \alpha_{T}(\mathfrak{b})).
\end{equation}
Then

\begin{itemize}
  \item [(i)]
%\begin{equation}
%\label{I1}
$
I=  \sum_{n,m\geq0}\sum_{a,b\in \mathfrak{g}}(\mathfrak{g}^{\otimes n} \odot (a\otimes b-\varepsilon(a,b)b\otimes a -[a,b]_{\mathfrak{g}}) )\odot \mathfrak{g}^{\otimes m}
$
%\end{equation}
  \item [(ii)] $\theta(I)=J$
\end{itemize}

\end{thm}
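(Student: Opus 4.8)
The plan is to prove the two parts in order, since part (i) rewrites the ideal $I$ in a "two-sided multiplication" form that makes the computation of $\theta(I)$ in part (ii) tractable. For part (i), the key observation is that a hom-ideal in a hom-associative algebra is not just closed under left/right multiplication by elements, but that in the free hom-associative algebra $T(\mathfrak{g})$ every element of $\mathfrak{g}^{\otimes n}\odot r\odot \mathfrak{g}^{\otimes m}$ (where $r$ is a generator from \eqref{21}) is obtained from $r$ by finitely many one-sided $\odot$-multiplications, using the associativity relations $\mathfrak{g}^{\otimes r}\odot\mathfrak{g}^{\otimes s}=\mathfrak{g}^{\otimes(r+s)}$ recalled just before Lemma~\ref{4ta}. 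So I would first show the right-hand side of (i) is contained in $I$ (immediate: each summand is a product of the generator with module elements, hence in the hom-ideal it generates), and then show $I$ is contained in it by checking that the right-hand side is itself a hom-ideal containing the generating set \eqref{21} — closure under $\alpha_T$ follows from $\alpha_T$ being multiplicative for $\odot$ and $\alpha_T(\mathfrak{g}^{\otimes k})=\mathfrak{g}^{\otimes k}$, while closure under $\odot$-multiplication on either side follows again from $\mathfrak{g}^{\otimes r}\odot\mathfrak{g}^{\otimes s}=\mathfrak{g}^{\otimes(r+s)}$ and hom-associativity absorbing the $\alpha_T$-twists back into the $\mathfrak{g}^{\otimes\bullet}$ factors.

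For part (ii), I would first translate the $\odot$-products in the expression for $I$ from part (i) into honest tensor products. Using $x\odot y=\alpha_T^{i-1}(x)\otimes \cdots$ repeatedly (and the fact that $\alpha_T$ is a bijection of each $\mathfrak{g}^{\otimes k}$), a general element of $I$ can be written as $\alpha_T^{p}(\mathfrak{a})\otimes(a\otimes b-\varepsilon(a,b)b\otimes a)\otimes\alpha_T^{q}(\mathfrak{b}) - \alpha_T^{p'}(\mathfrak{a})\otimes[a,b]_{\mathfrak{g}}\otimes\alpha_T^{q'}(\mathfrak{b})$ for suitable integers depending on lengths; since $\alpha_T$ is surjective on homogeneous components and $\alpha_T^2=\mathrm{id}$, re-indexing the summations shows $I$ already equals the set of all such tensor expressions with the exponents adjusted so that the two terms match up appropriately — which is close to the defining shape of $J$ (where the bracket term carries one extra $\alpha_T$ relative to the other term). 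The real work is then to apply $\theta$ to such a generic element and match it with $J$. Here I would lean on Lemma~\ref{4ta}: part (iii) tells us how $\theta$ distributes across a tensor product $\mathfrak{u}\otimes\mathfrak{w}$ (pushing extra powers of $\alpha_T$ onto the right factor), and part (iv) — the key technical input — handles precisely the length mismatch between $a\otimes b$ (length $2$) and $[a,b]_{\mathfrak{g}}$ (length $1$): it produces a single $\mathfrak{c}$ so that $\theta$ applied to the two terms of a generator of $I$ differs exactly by replacing $\mathfrak{c}$ with $\alpha_T(\mathfrak{c})$, which is exactly the $\alpha_T$-discrepancy visible between the two terms of $J$.

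So the argument for (ii) is: write a typical generator of $I$ as $\mathfrak{a}'\otimes(a\otimes b-\varepsilon(a,b)b\otimes a)\otimes\mathfrak{b}' - \mathfrak{a}''\otimes[a,b]_{\mathfrak{g}}\otimes\mathfrak{b}''$ with the tensor-length bookkeeping from part (i); apply $\theta$; use Lemma~\ref{4ta}(ii),(iii) to pull $\theta$ through the outer factors $\mathfrak{a}',\mathfrak{a}''$ (turning $\theta(\mathfrak{a})$ into a common prefix after using $\theta\alpha_T=\alpha_T\theta$ and surjectivity of $\alpha_T$ to absorb twists), and Lemma~\ref{4ta}(iv) with $\mathfrak{u}$ the length-$(i{+}1)$ piece ending in $a\otimes b$ and $\mathfrak{v}$ the length-$i$ piece ending in $[a,b]_{\mathfrak{g}}$ to see that the two resulting tensors agree up to the single $\alpha_T$ on the tail block $\mathfrak{c}$; finally compare with the defining sum \eqref{J1} for $J$ after re-indexing $n,m$ and $\mathfrak{a},\mathfrak{b}$ over the orbits of $\alpha_T$ (legitimate since $\theta(\mathfrak{g}^{\otimes i})=\mathfrak{g}^{\otimes i}$ and $\beta_{\mathfrak{g}}(\mathfrak{g})=\mathfrak{g}$, as noted before the lemma), giving $\theta(I)\subseteq J$ and, by running the same computation backwards (every step is reversible since $\theta$ and $\alpha_T$ are bijective on each homogeneous component), $J\subseteq\theta(I)$. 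The main obstacle I anticipate is purely bookkeeping: getting the powers of $\alpha_T$ in the $\odot$-to-$\otimes$ conversion to line up so that Lemma~\ref{4ta}(iv) applies verbatim with the single $\mathfrak{c}$ it provides, rather than a family of mismatched twists; this is where one must be careful that the extra $\alpha_T$ attached to $[a,b]_{\mathfrak{g}}$ in the generator of $I$ and the extra $\alpha_T$ attached to $[a,b]_{\mathfrak{g}}$ in \eqref{J1} are forced to coincide after applying $\theta$.
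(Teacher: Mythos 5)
Your plan is correct and follows essentially the same route as the paper: part (i) by showing the right-hand side is itself a hom-ideal containing the generators (using $\mathfrak{g}^{\otimes r}\odot\mathfrak{g}^{\otimes s}=\mathfrak{g}^{\otimes(r+s)}$, multiplicativity of $\alpha_{T}$ and hom-associativity to absorb the twists), and part (ii) by expanding the $\odot$-products into tensors, invoking Lemma \ref{4ta}, in particular part (iv) with the single block $\mathfrak{c}$ versus $\alpha_{T}(\mathfrak{c})$, to match the $\alpha_{T}$-discrepancy in \eqref{J1}, and then reversing the computation via bijectivity of $\theta$ and $\alpha_{T}$. The paper's proof is exactly this argument carried out with the explicit $\alpha_{T}$-power bookkeeping you flag as the remaining work.
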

\begin{proof}
\begin{itemize}

\item [(i)]

Since the hom-ideal $I$ is generated by the elements of the form
$$a\otimes b-\varepsilon(a,b)b\otimes a -[a,b]_{\mathfrak{g}},$$
 for all $a,b \in \mathfrak{g}$, the right hand side is contained in the left hand side. To prove the opposite, we just need to prove that the right hand side is a hom-ideal of $T(\mathfrak{g})$, i.e. it is closed under the left and right multiplication, and the operator $\alpha_{T}$. Therefore, we should check these, one by one.
For any natural number $k\geq0$, we have
\begin{align*}\label{pf}
  &((\mathfrak{g}^{\otimes n} \odot (a\otimes b-\varepsilon(a,b)b\otimes a -[a,b]_{\mathfrak{g}}) )\odot \mathfrak{g}^{\otimes m})\odot \mathfrak{g}^{\otimes k} \\
  = & \alpha_{T}(\mathfrak{g}^{\otimes n} \odot (a\otimes b-\varepsilon(a,b)b\otimes a -[a,b]_{\mathfrak{g}}) )\odot (\mathfrak{g}^{\otimes m}\odot \alpha_{T}(\mathfrak{g}^{\otimes k}))\\ %\text{  (by hom-associativity)}\\
  = & \alpha_{T}(\mathfrak{g}^{\otimes n} \odot (a\otimes b-\varepsilon(a,b)b\otimes a -[a,b]_{\mathfrak{g}}) ) \odot \mathfrak{g}^{\otimes m+k} \\
  = & (\alpha_{T}(\mathfrak{g}^{\otimes n}) \odot \alpha_{T}(a\otimes b-\varepsilon(a,b)b\otimes a -[a,b]_{\mathfrak{g}}) )\odot \mathfrak{g}^{\otimes m+k} \\
  = & (\mathfrak{g}^{\otimes n} \odot (\alpha_{T}(a)\otimes \alpha_{T}(b)-\varepsilon(a,b)\alpha_{T}(b)\otimes \alpha_{T}(a) -[\alpha_{T}(a),\alpha_{T}(b)]_{\mathfrak{g}}) )\\
  &\odot \mathfrak{g}^{\otimes m+k},
\end{align*}
which is seen to be contained in
$$\sum_{x,y\in \mathfrak{g}}(\mathfrak{g}^{\otimes n} \odot (x\otimes y-\varepsilon(x,y)y\otimes x -[x,y]_{\mathfrak{g}}) )\odot \mathfrak{g}^{\otimes m+k}.$$
Thus the right hand side is closed under the right multiplication. We can also get
\begin{align*}
&\alpha_{T}((\mathfrak{g}^{\otimes n} \odot (a\otimes b-\varepsilon(a,b)b\otimes a -[a,b]_{\mathfrak{g}}) ) \odot \mathfrak{g}^{\otimes m})\\
=&(\mathfrak{g}^{\otimes n} \odot (\alpha_{T}(a)\otimes \alpha_{T}(b)-\varepsilon(a,b)\alpha_{T}(b)\otimes \alpha_{T}(a) -[\alpha_{T}(a),\alpha_{T}(b)]_{\mathfrak{g}}) )\\
\odot& \mathfrak{g}^{\otimes m}
\end{align*}
which is contained in
$$\sum_{x,y\in\mathfrak{g}}(\mathfrak{g}^{\otimes n}\odot(x\otimes y-\varepsilon(x,y)y\otimes x -[x,y]_{\mathfrak{g}}))\odot \mathfrak{g}^{\otimes m}.$$
So the right hand side is a hom-ideal of $(T(\mathfrak{g}),\odot,\alpha_{T})$, too which contains the elements of the form
$$(x\otimes y-\varepsilon(x,y)y\otimes x -[x,y]_{\mathfrak{g}}),$$
for all $x,y \in \mathfrak{g}$. Therefore, it contains the left hand side.

\item [(ii)]

We first prove that $\theta(I)$ is contained in $J$. By the first part of the proposition, we just need to verify that the element
$$\theta((\mathfrak{a}\odot (x\otimes y-\varepsilon(x,y)y\otimes x -[x,y]_{\mathfrak{g}}))\odot \mathfrak{b}$$
 s contained in $J$, for $x,y\in \mathfrak{g}$ and $\mathfrak{a},\mathfrak{b}$ in $\mathfrak{g}^{\otimes n},\mathfrak{g}^{\otimes m}$, respectively, $n,m\geq0$.\\
We have
\begin{align*}
   & (\mathfrak{a}\odot (x\otimes y-\varepsilon(x,y)y\otimes x -[x,y]_{\mathfrak{g}}))\odot \mathfrak{b} \\
  = & (\alpha_{T}(\mathfrak{a})\otimes (x\otimes \alpha_{T}(y)-\varepsilon(x,y)y\otimes \alpha_{T}(x)) -\mathfrak{a}\otimes [x,y]_{\mathfrak{g}})\odot \mathfrak{b} \\
  = & \alpha_{T}^{m-1}(\alpha_{T}(\mathfrak{a})\otimes (x\otimes \alpha_{T}(y)-\varepsilon(x,y)y\otimes \alpha_{T}(x))\otimes b_{1}\\
  \otimes & \alpha_{T}(b_{2}\otimes \cdots\otimes b_{m}) \\
  -& \alpha_{T}^{m-1}(\mathfrak{a}\otimes [x,y]_{\mathfrak{g}})\otimes b_{1}\otimes \alpha_{T}(b_{2}\otimes \cdots\otimes b_{m}),
\end{align*}
by the definition of $\odot$. Furthermore, according to lemma \ref{4ta} \textit{(iv)}, there exists $\mathfrak{c}\in \mathfrak{g}^{\otimes n}$ such that
\begin{align*}
   & \theta ((\mathfrak{a}\odot (x\otimes y-\varepsilon(x,y)y\otimes x -[x,y]_{\mathfrak{g}}))\odot \mathfrak{b})\\
  =  &\theta(\alpha_{T}^{m-1} (\alpha_{T}(\mathfrak{a}) \otimes (x\otimes \alpha_{T}(y)-\varepsilon(x,y)y\otimes \alpha_{T}(x))))\otimes _{c} \\
  -& \theta(\alpha_{T}^{m-1}(\mathfrak{a}\otimes [x,y]_{\mathfrak{g}}))\otimes  \alpha_{T}(\mathfrak{c})\\
  = & \theta(\alpha_{T}^{m-1}(\mathfrak{a})\otimes (\alpha_{T}^{m-1}(x) \otimes (x\otimes \alpha_{T}(y)-\varepsilon(x,y)y\otimes \alpha_{T}(x))))\otimes \mathfrak{c} \\
  -& \theta(\alpha_{T}^{m-1}(\mathfrak{a})\otimes [\alpha_{T}^{m-1}(x),\alpha_{T}^{m-1}(y)]_{\mathfrak{g}})) \otimes  \alpha_{T}(\mathfrak{c}) \\
  =& \theta(\alpha_{T}^{m}(\mathfrak{a}))\otimes (\alpha_{T}^{n-1}(\alpha_{T}^{m-1}(x))\otimes \alpha_{T}^{n}(\alpha_{T}^{m}(y))-\varepsilon(x,y) \alpha_{T}^{n-1}(\alpha_{T}^{m-1}(y))\\
  & \otimes \alpha_{T}^{n}(\alpha_{T}^{m}(x)))\otimes \mathfrak{c} -  \theta(\alpha_{T}^{m-1}(\mathfrak{a})\otimes [\alpha_{T}^{m-1}(x),\alpha_{T}^{m-1}(y)]_{\mathfrak{g}})) \otimes  \alpha_{T}(\mathfrak{c}) \\
  = & \theta(\alpha_{T}^{m}(\mathfrak{a}))\otimes (\alpha_{T}^{n+m}(x)\otimes \alpha_{T}^{n+m} (y)-\varepsilon(x,y) \alpha_{T}^{n+m}(y)\otimes \alpha_{T}^{n+m}(x))\otimes \mathfrak{c} \\
  - & \alpha_{T}(\theta(\alpha_{T}^{m}(\mathfrak{a})))\otimes [\alpha_{T}^{n+m}(x),\alpha_{T}^{n+m}(y)]_{\mathfrak{g}})) \otimes  \alpha_{T}(\mathfrak{c}).
\end{align*}
This is an element in $$\sum_{\substack {\mathfrak{u}\in \mathfrak{g}^{\otimes n}\\ \mathfrak{v}\in \mathfrak{g}^{\otimes m}}}\sum_{s,t\in \mathfrak{g}}\mathfrak{u}\otimes (s\otimes t-\varepsilon(s,t)t\otimes s)\otimes \mathfrak{v} - \alpha_{T}(\mathfrak{u})\otimes [s,t]_{\mathfrak{g}}\otimes \alpha_{T}(\mathfrak{v})),$$
if we simply take $\mathfrak{u}:=\theta(\alpha_{T}^{m}(\mathfrak{a}))$, $s:=\alpha_{T}^{m+n}(x)$ and $t:=\alpha_{T}^{m+n}(y)$. Therefore, $\theta(I)$ is contained in $J$. Conversely, since $\theta$ and $\alpha_{T}$ are bijective, the above argument shows that any term
$$\mathfrak{u}\otimes (s\otimes t-\varepsilon(s,t)t\otimes s)\otimes \mathfrak{v} - \alpha_{T}(\mathfrak{u})\otimes [s,t]_{\mathfrak{g}}\otimes \alpha_{T}(\mathfrak{v})),$$
in the previous sum can be expressed in the form
$$\theta((\mathfrak{a} \odot (x\otimes y-\varepsilon(x,y)y\otimes x -[x,y]_{\mathfrak{g}}) )\odot \mathfrak{b}),$$
which shows the surjectivity of $\theta$ and completes the proof.

\end{itemize}
\end{proof}
In the next theorem, we suppose that $\mathfrak{g}$ is an involutive hom-Lie color algebra with a basis $X=\{x_{n}|n\in \omega \}$ for a well ordered set $\omega$.

\begin{thm}\label{dec}
Let $\mathfrak{g}:= (\mathfrak{g},[,]_{\mathfrak{g}},\beta_{\mathfrak{g}})$ be an involutive hom-Lie color algebra such that $\beta_{\mathfrak{g}}(X)=X$. Let $W$ be the one defined in \eqref{tfW} and let $\mu\in k$ be given. If we define
\begin{equation}\label{J}
J_{\mu}:=  \sum_{n,m\geq0}\sum_{\substack {\mathfrak{a}\in \mathfrak{g}^{\otimes n}\\ \mathfrak{b}\in \mathfrak{g}^{\otimes m}}}\sum_{a,b\in \mathfrak{g}}(\mathfrak{a}\otimes (a\otimes b-\varepsilon(a,b)b\otimes a)\otimes \mathfrak{b} - \mu^{n+m} \alpha_{T}(\mathfrak{a})\otimes [a,b]_{\mathfrak{g}}\otimes \alpha_{T}(\mathfrak{b})),
\end{equation}
then we can have the linear decomposition
$$T(\mathfrak{g})=J_{\mu}\oplus k W.$$

\end{thm}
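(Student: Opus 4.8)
The statement is of Poincar\'e--Birkhoff--Witt type, so I would split the proof into a \emph{spanning} part, $T(\mathfrak{g})=J_\mu+kW$, and a \emph{directness} part, $J_\mu\cap kW=\{0\}$.

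For spanning it is enough to put every pure tensor $x_{n_1}\otimes\cdots\otimes x_{n_i}$ into $J_\mu+kW$, which I would do by a double induction: the outer one on the length $i$, the inner one (with $i$ fixed) on the number of inversions of $(n_1,\dots,n_i)$ relative to the well order on $\omega$. If the index sequence is non-increasing the monomial already lies in $W$. Otherwise pick an adjacent descent $n_k<n_{k+1}$ and apply the defining element of $J_\mu$ with $\mathfrak{a}=x_{n_1}\otimes\cdots\otimes x_{n_{k-1}}\in\mathfrak{g}^{\otimes(k-1)}$, $a=x_{n_k}$, $b=x_{n_{k+1}}$, $\mathfrak{b}=x_{n_{k+2}}\otimes\cdots\otimes x_{n_i}\in\mathfrak{g}^{\otimes(i-k-1)}$: modulo $J_\mu$ this rewrites the monomial as $\varepsilon(x_{n_k},x_{n_{k+1}})$ times the monomial with the $k$th and $(k{+}1)$st factors transposed, plus $\mu^{\,i-2}$ times $\alpha_T(\mathfrak{a})\otimes[x_{n_k},x_{n_{k+1}}]_{\mathfrak{g}}\otimes\alpha_T(\mathfrak{b})$. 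The first term has strictly fewer inversions, hence is handled by the inner hypothesis; the second is a linear combination of tensors of length $i-1$, hence is handled by the outer hypothesis. The hypothesis $\beta_{\mathfrak{g}}(X)=X$ is exactly what makes $\alpha_T$ send basis monomials to basis monomials, so that one stays inside the induction.

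For directness, the case $\mu=0$ is classical: then $J_0$ is the ordinary two-sided $\otimes$-ideal generated by the $\varepsilon$-skew relations, $T(\mathfrak{g})/J_0$ is the $\varepsilon$-symmetric algebra on $\mathfrak{g}$, and $W$ is its standard basis. For $\mu\neq 0$ I would reduce to $\mu=1$ by the rescaling operator $d_\mu\colon T(\mathfrak{g})\to T(\mathfrak{g})$ acting on $\mathfrak{g}^{\otimes k}$ as the scalar $\mu^{-\binom{k-1}{2}}$: since $\binom{k}{2}-\binom{k-1}{2}=k-1$, a one-line computation on the generator indexed by $\mathfrak{a}\in\mathfrak{g}^{\otimes n}$, $\mathfrak{b}\in\mathfrak{g}^{\otimes m}$ shows that $d_\mu$ carries the $J_1$-generator to $\mu^{-\binom{n+m+1}{2}}$ times the $J_\mu$-generator, so $d_\mu(J_1)=J_\mu$, while $d_\mu$ multiplies each homogeneous component of $kW$ by a nonzero scalar, so $d_\mu(kW)=kW$; hence $T(\mathfrak{g})=J_1\oplus kW$ iff $T(\mathfrak{g})=J_\mu\oplus kW$.

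It remains to prove $J_1\cap kW=\{0\}$, i.e. that the composite $kW\hookrightarrow T(\mathfrak{g})\twoheadrightarrow T(\mathfrak{g})/J_1$ is injective; this is the genuine obstacle, and I would attack it as in the classical PBW argument by producing a splitting outright. Concretely, I would equip the vector space $kW$ with a linear action of the tensor algebra $T(\mathfrak{g})$, defined by recursion on the length of the word in $W$ acted upon, so that the action of $x_n$ on $w$ implements the rewriting from the spanning step, so that every generator of the hom-ideal $I$ --- equivalently, by the preceding theorem identifying $\theta(I)$ with $J_1$, every generator of $J_1$ --- acts as zero, and so that the induced retraction $\rho\colon T(\mathfrak{g})\to kW$ is the identity on $kW$; then $\rho$ splits the canonical map and, combined with the spanning part, yields $T(\mathfrak{g})=J_1\oplus kW$. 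The whole content of the theorem is the well-definedness of this action, i.e. its consistency on the ambiguity created when $x_p\otimes x_q\otimes x_r$ has two adjacent descents: the two admissible reductions differ by exactly the cyclic expression $\sum_{\mathrm{cyclic}\{x,y,z\}}\varepsilon(z,x)[\alpha(x),[y,z]]$ --- with the operators $\alpha_T$ occurring in the positions forced by Lemma~\ref{4ta}, which is where involutivity of $\beta_{\mathfrak{g}}$ is used to bookkeep the powers of $\alpha_T$ --- so the $\varepsilon$-Hom-Jacobi identity makes the discrepancy vanish. I expect this resolution-of-ambiguities step to carry essentially all the difficulty; everything else is bookkeeping. (If $\mathfrak{g}$ is in addition multiplicative, an alternative for the $\mu=1$ case is to untwist $U(\mathfrak{g})=T(\mathfrak{g})/I$ into the ordinary universal enveloping algebra of the Lie color algebra $(\mathfrak{g},\beta_{\mathfrak{g}}\circ[\,,\,]_{\mathfrak{g}},\varepsilon)$ --- whose bracket satisfies the ordinary generalized Jacobi identity because $\mathrm{char}\,K=0$ --- and invoke Scheunert's PBW theorem, transporting the conclusion back along the bijection $\theta$.)
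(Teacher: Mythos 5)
Your proposal is correct in outline and follows essentially the same route as the paper's proof. The spanning half is identical: a double induction on tensor length and on the number of inversions, swapping an adjacent ascent modulo a generator of $J_\mu$ from \eqref{J} and pushing the resulting bracket term into lower length. For directness, the paper builds a straightening operator $S$ on $T(\mathfrak{g})$, defined recursively by $S(t)=t$ for $t\in W$ together with the rewriting rule \eqref{model}, and the whole work goes into showing $S$ is independent of the chosen descent position, split into the disjoint case $|r-r'|\geq 2$ (settled by $\varepsilon$-skew-symmetry) and the overlapping case $|r-r'|=1$ (settled by the $\varepsilon$-hom-Jacobi identity, with the $\alpha_T$-placements governed by Lemma~\ref{4ta}); your recursively defined action on $kW$ with its induced retraction $\rho$ is the same construction in module-theoretic dress, and you correctly locate the crux in exactly that resolution-of-ambiguities step. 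The genuine differences are minor. First, the paper keeps $\mu$ inside the rewriting rule (the factor $\mu^{p-2}$) and thus treats all $\mu\in k$ uniformly, whereas you rescale to $\mu=1$; your computation $d_\mu(J_1)=J_\mu$, $d_\mu(kW)=kW$ is correct for $\mu\neq 0$, but your side remark on $\mu=0$ is off: since $\mu^{0}=1$, the $n=m=0$ generators of $J_0$ are still $a\otimes b-\varepsilon(a,b)\,b\otimes a-[a,b]_{\mathfrak{g}}$, so $J_0$ is not the ideal generated by the $\varepsilon$-skew relations and $T(\mathfrak{g})/J_0$ is not the $\varepsilon$-symmetric algebra; this gap is harmless because the uniform-in-$\mu$ straightening argument (as in the paper) covers $\mu=0$ as well. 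Second, your closing alternative for multiplicative $\mathfrak{g}$ (untwisting and invoking Scheunert's PBW theorem) is not pursued in the paper, which argues entirely within $T(\mathfrak{g})$ and needs no multiplicativity beyond involutivity and $\beta_{\mathfrak{g}}(X)=X$.
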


\begin{proof}
Let us first introduce some notations. For $i\geq2$ let
$$\mathfrak{x}:=x_{n_{1}}\otimes x_{n_{2}}\otimes \cdots \otimes x_{n_{i}}\in X^{\otimes i}\subseteq \mathfrak{g}^{\otimes i}.$$
Define the index of $\mathfrak{x}$ to be
$$d:=\mid\{(r,s)|r<s, n_{r}< n_{s},1\leq r,s\leq i\}\mid .$$
Let $\mathfrak{g}_{i,d}$ be the linear span of all pure tensors $\mathfrak{x}$ of degree $i$ and index $d$. Then we have
$$\mathfrak{g}^{\otimes i}=\bigoplus_{d\geqslant 0}\mathfrak{g}_{i,d}.$$
In particular, $\mathfrak{g}_{i,0}=kW^{(i)}$, where
$$W^{(i)}:=\{x_{n_{1}}\otimes x_{n_{2}}\otimes \cdots \otimes x_{n_{i}}\in X^{\otimes i}|n_{1}\geqslant n_{2}\geqslant \cdots \geqslant n_{i}\}.$$

In order to prove $T(\mathfrak{g})=J_{\mu}\oplus kW$, we need to prove that
$$\mathfrak{g}^{\otimes i}\subseteq J_{\mu} \oplus \sum_{1\leqslant q\leqslant i} kW^{(q)},$$
using induction on $i\geq 1$. For $i=1$, we have $kW^{(1)}=\mathfrak{g}$. So $\mathfrak{g}\subseteq J_{\mu} \oplus kW^{(1)} $. Suppose that the above equation is true for $n\geq1$. Since $\mathfrak{g}^{\otimes (i+1)}=\sum_{d\geq0}\mathfrak{g}_{i+1,d}$, we just need to prove that
$$\mathfrak{g}_{i+1,d}\subseteq J_{\mu} \oplus \sum_{1\leqslant q\leqslant i+1} kW^{(q)},$$
for all $d\geq0$. We use induction on $d$. For $d=0$, we have $\mathfrak{g}_{i+1,0}=kW^{(i+1)}$. Suppose for $l\geq0$ we have
$$\mathfrak{g}_{i+1,l}\subseteq J_{\mu} \oplus \sum_{1\leqslant q\leqslant i+1} kW^{(q)}.$$
Let $\mathfrak{x}=x_{n_{1}}\otimes x_{n_{2}}\otimes \cdots \otimes x_{n_{i}}\in X^{\otimes(i+1)}\cap \mathfrak{q}_{i+1,l+1}$. Since $l+1\geq1$, we can choose an integer $1\leq r\leq i$ such that $n_{r}\leq n_{r+1} $. Let
$$\mathfrak{x}'=x_{n_{1}}\otimes  \cdots \otimes x_{n_{r+1}}\otimes x_{n_{r}}\otimes \cdots \otimes x_{n_{i+1}}$$
be the pure tensor formed by interchanging $x_{n_{r}}$ by $x_{n_{r+1}}$ in $\mathfrak{x}$. Then
$$\mathfrak{x}'\in \mathfrak{g}_{i+1,l}\subseteq J_{\mu} \oplus \sum_{1\leqslant q\leqslant i+1} kW^{(q)}.$$
Since the definition of $J_{\mu}$ gives
$$\mathfrak{x}-\mathfrak{x}'\equiv \mu^{i-1} \alpha_{T}(x_{n_{1}}\otimes \cdots \otimes x_{n_{i}}) \otimes [x_{n_{r}},x_{n_{r+1}}]_{\mathfrak{g}} \otimes \alpha_{T}(x_{n_{r+2}}\otimes \cdots \otimes x_{n_{i+1}} )(mod ~~J_{\mu}),$$
by the induction hypothesis on $i$, we have
$$\mathfrak{x}\in J_{\mu} \oplus \sum_{1\leqslant q\leqslant i+1} kW^{(q)} \oplus \sum_{1\leqslant q\leqslant i} kW^{(q)}.$$
So $\mathfrak{x}$ is in $J_{\mu} \oplus \sum_{1\leqslant q\leqslant i+1} kW^{(q)}$. This proves that $\mathfrak{g}_{i+1,l+1}\subseteq J_{\mu} \oplus \sum_{1\leqslant q\leqslant i+1} kW^{(q)}$. Hence, $\mathfrak{g}^{\otimes i+1}\subseteq J_{\mu} \oplus \sum_{1\leqslant q\leqslant i+1} kW^{(q)}$ which completes the induction steps on $i$.

Now, we want to show that $J_{\mu}\cap kW=0$. Let $S$ be an operator on $T(\mathfrak{g})$ such that
%\begin{empheq}
%[left=\empheqlbrace]
\begin{eqnarray}  \label{model}
\text{(i)}
&S(t)=t,\text{ for all } t\in W.  \nonumber \\
\text{(ii)}
&\text{if } p\geq 2, 1\leq s\leq p-1,\text{ and } n_{s}<n_{s+1},\text{ then } \\ \nonumber
&S(x_{n_{1}}\otimes  \cdots \otimes x_{n_{s}}\otimes x_{n_{s+1}}\otimes \cdots \otimes x_{n_{p}})\\ \nonumber
&= S(x_{n_{1}}\otimes  \cdots \otimes x_{n_{s+1}}\otimes x_{n_{s}}\otimes \cdots \otimes x_{n_{p}})\\ \nonumber
&+S(\mu^{p-2}\alpha_{T}(x_{n_{1}}\otimes \cdots\otimes x_{n_{s-1}})\otimes [x_{n_{s}},x_{n_{s+1}}]_{\mathfrak{g}}\otimes \alpha_{T}(x_{n_{s+2}}\otimes \cdots \otimes x_{n_{p}})).
\end{eqnarray}
We define $S$ on $\sum_{1\leqslant q\leqslant i}\mathfrak{g}^{\otimes q}$ by induction on $i$. For $i=1$, we define $S:=Id_{\mathfrak{g}} $. Let $n\geq2$ and let $S$ to be an operator on $\sum_{1\leqslant q\leqslant i}\mathfrak{g}^{\otimes q}$ satisfying \eqref{model} for all tensors of degree $i$. Note that $\mathfrak{g}^{\otimes i+1}= \sum_{d\leqslant 0}\mathfrak{g}_{i+1,d}$. For a pure tensor $$\mathfrak{x}= x_{n_{1}}\otimes x_{n_{2}}\otimes \cdots \otimes x_{n_{i+1}}\in X^{\otimes i+1}\subseteq \mathfrak{g}^{\otimes i+1},$$ we use induction again on $d$ which is the index of $\mathfrak{x}$, in order to extend $S$ to an operator on $\sum_{1\leqslant q\leqslant i+1}\mathfrak{g}^{\otimes q}$. For $d=0$, define $S(\mathfrak{x})=\mathfrak{x}$. For $l\geq0$, suppose that $S(\mathfrak{x})$ has been defined for $$\mathfrak{x}\in \sum_{1\leqslant p \leqslant l}\mathfrak{g}_{i+1,p}. $$
Let $\mathfrak{x}\in \mathfrak{g}_{i+1,l+1}$. Let $1\leqslant r\leqslant i$ be an integer such that $n_{r}< n_{r+1}$. Then
\begin{align*}
  & S(\mathfrak{x}) :=  S(x_{n_{1}}\otimes  \cdots \otimes x_{n_{r+1}}\otimes x_{n_{r}}\otimes \cdots \otimes x_{n_{i+1}}) \\
 & +  S(\mu^{i-1}\alpha_{T}(x_{n_{1}}\otimes \cdots\otimes x_{n_{r-1}})\otimes [x_{n_{r}},x_{n_{r+1}}]_{\mathfrak{g}}\otimes \alpha_{T}(x_{n_{r+2}}\otimes \cdots \otimes x_{n_{i+1}})).
\end{align*}
We should show that $S$ is well defined and it is independent of the choice of $r$. Therefore, let $r'$ bee another integer, $1\leq r'\leq i$, such that $n_{r'}< n_{r'+1}$. Consider
\begin{align*}
  u:= & S(x_{n_{1}}\otimes  \cdots \otimes x_{n_{r+1}}\otimes x_{n_{r}}\otimes \cdots \otimes x_{n_{i+1}}) \\
  + & S(\mu^{i-1}\alpha_{T}(x_{n_{1}}\otimes \cdots\otimes x_{n_{r-1}})\otimes [x_{n_{r}},x_{n_{r+1}}]_{\mathfrak{g}}\otimes \alpha_{T}(x_{n_{r+2}}\otimes \cdots \otimes x_{n_{i+1}})),
\end{align*}
and
\begin{align*}
  v:= & S(x_{n_{1}}\otimes  \cdots \otimes x_{n_{r'+1}}\otimes x_{n_{r'}}\otimes \cdots \otimes x_{n_{i+1}}) \\
  + & S(\mu^{i-1}\alpha_{T}(x_{n_{1}}\otimes \cdots\otimes x_{n_{r'-1}})\otimes [x_{n_{r'}},x_{n_{r'+1}}]_{\mathfrak{g}}\otimes \alpha_{T}(x_{n_{r'+2}}\otimes \cdots \otimes x_{n_{i+1}})).
\end{align*}
We check that $u=v$.
There appear two cases

\noindent \textbf{Case 1:} If $|r-r'|\geq2$, without losing the generality, we assume $r-r'\geq2$. Since $u,v \in \sum_{0\leqslant p \leqslant l}\mathfrak{g}_{i+1,p}+\sum_{1\leqslant q \leqslant i}\mathfrak{g}^{\otimes q}$, we have
      \begin{align*}
        u = &  S(x_{n_{1}}\otimes  \cdots \otimes x_{n_{r+1}}\otimes x_{n_{r}}\otimes \cdots \otimes x_{n_{r'}}\otimes x_{n_{r'+1}}\otimes \cdots \otimes x_{n_{i+1}}) \\
        + & S(\mu^{i-1}\alpha_{T}(x_{n_{1}}\otimes \cdots\otimes x_{n_{r-1}})\otimes [x_{n_{r}},x_{n_{r+1}}]_{\mathfrak{g}} \otimes \alpha_{T}(x_{n_{r+2}}\otimes \cdots \otimes x_{n_{i+1}})) \\
        = & S(x_{n_{1}}\otimes  \cdots \otimes x_{n_{r+1}}\otimes x_{n_{r}}\otimes \cdots \otimes x_{n_{i+1}}) \\
        + & S(\mu^{i-1}\alpha_{T}(x_{n_{1}}\otimes \cdots\otimes x_{n_{r-1}}\otimes x_{n_{r}}\otimes \cdots) \otimes [x_{n_{r'}},x_{n_{r'+1}}]_{\mathfrak{g}}\\
        &\otimes \alpha_{T}(\cdots \otimes x_{n_{r+1}})) \\
        + & S(\mu^{i-1}\alpha_{T}(x_{n_{1}}\otimes \cdots)\otimes [x_{n_{r}},x_{n_{r+1}}]_{\mathfrak{g}} \\
        &\otimes \alpha_{T}(\cdots \otimes x_{n_{r'}}\otimes x_{n_{r'+1}}\otimes \cdots \otimes x_{n_{i+1}})),\\
     % \end{align*}
%
%      \begin{align*}
        v = &  S(x_{n_{1}}\otimes  \cdots \otimes x_{n_{r}}\otimes x_{n_{r+1}}\otimes \cdots \otimes x_{n_{r'+1}}\otimes x_{n_{r'}}\otimes \cdots \otimes x_{n_{i+1}}) \\
        + & S(\mu^{i-1}\alpha_{T}(x_{n_{1}}\otimes \cdots\otimes x_{n_{r}}\otimes x_{n_{r-1}}\otimes \cdots )\otimes [x_{n_{r'}},x_{n_{r'+1}}]_{\mathfrak{g}}\\
        &\otimes \alpha_{T}(x_{n_{r+2}}\otimes \cdots \otimes x_{n_{i+1}})) \\
        = & S(x_{n_{1}}\otimes \cdots \otimes x_{n_{r+1}}\otimes x_{n_{r}}\otimes \cdots \otimes x_{n_{r'+1}}\otimes x_{n_{r'}}\otimes \cdots\otimes  x_{n_{i+1}}) \\
        + & S(\mu^{i-1}\alpha_{T}(x_{n_{1}}\otimes \cdots) \otimes [x_{n_{r}} ,x_{n_{r+1}}]_{\mathfrak{g}}\\
         &\otimes \alpha_{T}(\cdots \otimes x_{n_{r'+1}}\otimes x_{n_{r'+1}}\otimes x_{n_{i+1}})) \\
        + & S(\mu^{i-1}\alpha_{T}(x_{n_{1}}\otimes \cdots \otimes x_{n_{r}}\otimes x_{n_{r+1}}\otimes \cdots)\otimes [x_{n_{r'}},x_{n_{r'+1}}]_{\mathfrak{g}} \\
        &\otimes \alpha_{T}(\cdots \otimes x_{n_{i+1}})),\\
      \end{align*}
      by the induction hypothesis. Now, since $\beta_{\mathfrak{g}}(X)=X$ and $$x_{n_{r}}\neq x_{n_{r+1}}, ~~~~~x_{n_{r'}}\neq x_{n_{r'+1}},$$ we get $\alpha_{T}(x_{n_{r}}), \alpha_{T}(x_{n_{r+1}}), \alpha_{T}(x_{n_{r'}}), \alpha_{T}(x_{n_{r'+1}})\in X$ and
      $$\alpha_{T}(x_{n_{r}})\neq \alpha_{T}(x_{n_{r+1}}),~~~~~\alpha_{T}(x_{n_{r'}})\neq \alpha_{T}(x_{n_{r'+1}}).$$
      This leads to four different cases among of which we only consider the case of $\alpha_{T}(x_{n_{r}})> \alpha_{T}(x_{n_{r+1}})$ and $\alpha_{T}(x_{n_{r'}})< \alpha_{T}(x_{n_{r'+1}})$ without loosing the generality.
      We obtain
      \begin{align*}
         & S(\mu^{i-1}\alpha_{T}(x_{n_{1}}\otimes \cdots\otimes x_{n_{r+1}}\otimes x_{n_{r}}\otimes \cdots )\otimes [x_{n_{r'}},x_{n_{r'+1}}]_{\mathfrak{g}} \\
         &\otimes \alpha_{T}(\cdots \otimes x_{n_{i+1}})) \\
        & = S(\mu^{i-1}\alpha_{T}(x_{n_{1}}\otimes \cdots\otimes x_{n_{r}}\otimes x_{n_{r+1}}\otimes \cdots )\otimes [x_{n_{r'}},x_{n_{r'+1}}]_{\mathfrak{g}} \\
        &\otimes\alpha_{T}(\cdots \otimes x_{n_{i+1}})) \\
        &+ S(\mu^{2i-3}(x_{n_{1}}\otimes \cdots\otimes [\alpha_{T}(x_{n_{r+1}}), \alpha_{T}(x_{n_{r}})]_{\mathfrak{g}} \\
        &\otimes \cdots \otimes [\alpha_{T}(x_{n_{r'}}), \alpha_{T}(x_{n_{r'+1}})]_{\mathfrak{g}} \otimes \cdots \otimes x_{n_{i+1}})),
      \end{align*}
     and
   \begin{align*}
         & S(\mu^{i-1}\alpha_{T}(x_{n_{1}}\otimes \cdots)\otimes [x_{n_{r}},x_{n_{r+1}}]_{\mathfrak{g}} \\
         &\otimes\alpha_{T}(\cdots \otimes  x_{n_{r'}} \otimes x_{n_{r'+1}}\otimes \cdots\otimes  x_{n_{i+1}})) \\
        = & S(\mu^{i-1}\alpha_{T}(x_{n_{1}}\otimes \cdots)\otimes [x_{n_{r}}, x_{n_{r+1}}]_{\mathfrak{g}} \\
        &\otimes\alpha_{T}(\cdots \otimes  x_{n_{r'+1}} \otimes x_{n_{r'}}\otimes \cdots\otimes  x_{n_{i+1}})) \\
        + & S(\mu^{2i-3}(x_{n_{1}}\otimes \cdots\otimes [\alpha_{T}(x_{n_{r}}), \alpha_{T}(x_{n_{r+1}})]_{\mathfrak{g}}\\
        &\otimes \cdots \otimes [\alpha_{T}(x_{n_{r'}}), \alpha_{T}(x_{n_{r'+1}})]_{\mathfrak{g}} \otimes \cdots \otimes x_{n_{i+1}})).
      \end{align*}
      If we combine the two former expressions for $u$, we get
      \begin{align*}
        u = &  S(x_{n_{1}}\otimes  \cdots \otimes x_{n_{r+1}}\otimes x_{n_{r}}\otimes \cdots \otimes x_{n_{r'+1}}\otimes x_{n_{r'}}\otimes \cdots \otimes x_{n_{i+1}}) \\
        + & S(\mu^{i-1}\alpha_{T}(x_{n_{1}}\otimes \cdots\otimes x_{n_{r}}\otimes x_{n_{r+1}}\otimes \cdots) \otimes [x_{n_{r'}},x_{n_{r'+1}}]_{\mathfrak{g}}\\
        &\otimes \alpha_{T}(\cdots \otimes x_{n_{i+1}})) \\
        + & S(\mu^{i-1}\alpha_{T}(x_{n_{1}}\otimes \cdots)\otimes [x_{n_{r}},x_{n_{r+1}}]_{\mathfrak{g}}\\
         &\otimes \alpha_{T}(\cdots \otimes x_{n_{r'+1}}\otimes x_{n_{r'}}\otimes \cdots \otimes x_{n_{i+1}}))\\
        + & S(\mu^{2i-3}(x_{n_{1}}\otimes \cdots\otimes [\alpha_{T}(x_{n_{r+1}}), \alpha_{T}(x_{n_{r}})]_{\mathfrak{g}}\otimes \cdots \\
        &\otimes [\alpha_{T}(x_{n_{r'}}), \alpha_{T}(x_{n_{r'+1}})]_{\mathfrak{g}} \otimes \cdots \otimes x_{n_{i+1}}))\\
        + & S(\mu^{2i-3}(x_{n_{1}}\otimes \cdots\otimes [\alpha_{T}(x_{n_{r}}), \alpha_{T}(x_{n_{r+1}})]_{\mathfrak{g}}\otimes \cdots \\
        &\otimes [\alpha_{T}(x_{n_{r'}}), \alpha_{T}(x_{n_{r'+1}})]_{\mathfrak{g}} \otimes \cdots \otimes x_{n_{i+1}}))
      \end{align*}
      \begin{align*}
      = & S(x_{n_{1}}\otimes  \cdots \otimes x_{n_{r+1}}\otimes x_{n_{r}}\otimes \cdots \otimes x_{n_{r'+1}}\otimes x_{n_{r'}}\otimes \cdots x_{n_{i+1}})\\
        + & S(\mu^{i-1}\alpha_{T}(x_{n_{1}}\otimes \cdots\otimes x_{n_{r}}\otimes x_{n_{r+1}}\otimes \cdots) \otimes [x_{n_{r'}},x_{n_{r'+1}}]_{\mathfrak{g}}\\
        &\otimes \alpha_{T}(\cdots \otimes x_{n_{i+1}}))\\
        + & S(\mu^{i-1}\alpha_{T}(x_{n_{1}}\otimes \cdots)\otimes [x_{n_{r}}, x_{n_{r+1}}]_{\mathfrak{g}}\\
        &\otimes \alpha_{T}(\cdots \otimes  x_{n_{r'+1}} \otimes x_{n_{r'}}\otimes \cdots\otimes  x_{n_{i+1}}))= v,
      \end{align*}
      by the skew-symmetry condition of the bracket.

\noindent \textbf{Case 2:} If $|r-r'|=1$, ones again, without loosing the generality, let us suppose $r'=r+1$. This leads to $n_{r}<n_{i+1}<n_{i+2}$. We obtain
      \begin{align*}
        u = &  S(x_{n_{1}}\otimes  \cdots \otimes x_{n_{r+1}}\otimes x_{n_{r}}\otimes x_{n_{r+2}} \otimes \cdots \otimes x_{n_{i+1}}) \\
        + & S(\mu^{i-1}\alpha_{T}(x_{n_{1}}\otimes \cdots\otimes x_{n_{r-1}})\otimes [x_{n_{r}},x_{n_{r+1}}]_{\mathfrak{g}}\\
        &\otimes \alpha_{T}(x_{n_{r+2}}\otimes \cdots \otimes x_{n_{i+1}})) \\
        = & S(x_{n_{1}}\otimes \cdots \otimes x_{n_{r+1}}\otimes x_{n_{r+2}}\otimes x_{n_{r}} \otimes \cdots \otimes x_{n_{i+1}})\\
        + & S(\mu^{i-1}\alpha_{T}(x_{n_{1}}\otimes \cdots\otimes x_{n_{r+1}})\otimes [x_{n_{r}},x_{n_{r+2}}]_{\mathfrak{g}}\otimes \alpha_{T}(\cdots \otimes x_{n_{i+1}}))\\
        + & S(\mu^{i-1}\alpha_{T}(x_{n_{1}}\otimes \cdots\otimes x_{n_{r-1}})\otimes [x_{n_{r}},x_{n_{r+1}}]_{\mathfrak{g}}\\
        &\otimes \alpha_{T}(x_{n_{r+2}}\otimes \cdots \otimes x_{n_{i+1}}))\\
        = & S(x_{n_{1}}\otimes \cdots \otimes x_{n_{r+2}}\otimes x_{n_{r+1}}\otimes x_{n_{r}} \otimes \cdots \otimes x_{n_{i+1}})\\
        + & S(\mu^{i-1}\alpha_{T}(x_{n_{1}}\otimes \cdots)\otimes [x_{n_{r+1}}, x_{n_{r+2}}]_{\mathfrak{g}}\otimes \alpha_{T}(x_{n_{r}}\otimes \cdots \otimes x_{n_{i+1}}))\\
        + & S(\mu^{i-1}\alpha_{T}(x_{n_{1}}\otimes \cdots\otimes x_{n_{r+1}})\otimes [x_{n_{r}},x_{n_{r+2}}]_{\mathfrak{g}}\otimes \alpha_{T}(\cdots \otimes x_{n_{i+1}}))\\
        + & S(\mu^{i-1}\alpha_{T}(x_{n_{1}}\otimes \cdots\otimes x_{n_{r-1}})\otimes [x_{n_{r}},x_{n_{r+1}}]_{\mathfrak{g}}\\&\otimes \alpha_{T}(x_{n_{r+2}}\otimes \cdots \otimes x_{n_{i+1}})),
      \end{align*}
      and
      \begin{align*}
        v = &  S(x_{n_{1}}\otimes  \cdots \otimes x_{n_{r}}\otimes x_{n_{r+2}}\otimes x_{n_{r+1}} \otimes \cdots \otimes x_{n_{i+1}}) \\
        + & S(\mu^{i-1}\alpha_{T}(x_{n_{1}}\otimes \cdots\otimes x_{n_{r}})\otimes [x_{n_{r+1}},x_{n_{r+2}}]_{\mathfrak{g}}\otimes \alpha_{T}( \cdots \otimes x_{n_{i+1}})) \\
        = & S(x_{n_{1}}\otimes \cdots \otimes x_{n_{r+2}}\otimes x_{n_{r}}\otimes x_{n_{r+1}} \otimes \cdots \otimes x_{n_{i+1}})\\
        + & S(\mu^{i-1}\alpha_{T}(x_{n_{1}}\otimes \cdots)\otimes [x_{n_{r}}, x_{n_{r+2}}]_{\mathfrak{g}}\otimes \alpha_{T}(x_{n_{r+1}}\otimes \cdots \otimes x_{n_{i+1}}))\\
        + & S(\mu^{i-1}\alpha_{T}(x_{n_{1}}\otimes \cdots\otimes x_{n_{r}})\otimes [x_{n_{r+1}},x_{n_{r+2}}]_{\mathfrak{g}}\otimes \alpha_{T}(\cdots \otimes x_{n_{i+1}}))
      \end{align*}
      \begin{align*}
      = & S(x_{n_{1}}\otimes \cdots \otimes x_{n_{r+2}}\otimes x_{n_{r+1}}\otimes x_{n_{r}} \otimes \cdots \otimes x_{n_{i+1}})\\
        + & S(\mu^{i-1}\alpha_{T}(x_{n_{1}}\otimes \cdots\otimes x_{n_{r+2}} )\otimes [x_{n_{r}}, x_{n_{r+1}}]_{\mathfrak{g}}\otimes \alpha_{T}(\cdots \otimes x_{n_{i+1}}))\\
        + & S(\mu^{i-1}\alpha_{T}(x_{n_{1}}\otimes \cdots)\otimes [x_{n_{r}}, x_{n_{r+2}}]_{\mathfrak{g}}\otimes \alpha_{T}(x_{n_{r+1}} \otimes\cdots \otimes x_{n_{i+1}}))\\
        + & S(\mu^{i-1}\alpha_{T}(x_{n_{1}}\otimes \cdots\otimes x_{n_{r}})\otimes [x_{n_{r+1}},x_{n_{r+2}}]_{\mathfrak{g}}\otimes \alpha_{T}(\cdots \otimes x_{n_{i+1}})).
      \end{align*}
      Moreover, for any $a<b$, $t_{1}\in \mathfrak{g}^{\otimes m}$, $t_{2}\in \mathfrak{g}^{\otimes k}$, $m+k=i-2$, we have
      $$S(t_{1}\otimes a\otimes b\otimes t_{2})-S(t_{1}\otimes b\otimes a\otimes t_{2})=S(\mu^{m+k}\alpha_{T}(t_{1})\otimes [a,b] \otimes \alpha_{T}(t_{2})).$$
      So the sum of the last three terms of the previous expression of $u$ is
      \begin{align*}
         & S(\mu^{i-1}\alpha_{T}(x_{n_{1}}\otimes \cdots)\otimes [x_{n_{r+1}}, x_{n_{r+2}}]_{\mathfrak{g}}\otimes \alpha_{T}(x_{n_{r}} \otimes\cdots \otimes x_{n_{i+1}})) \\
        + & S(\mu^{i-1}\alpha_{T}(x_{n_{1}}\otimes \cdots \otimes x_{n_{r+1}})\otimes [x_{n_{r}}, x_{n_{r+2}}]_{\mathfrak{g}}\otimes \alpha_{T}(\cdots \otimes x_{n_{i+1}})) \\
        + & S(\mu^{i-1}\alpha_{T}(x_{n_{1}}\otimes \cdots \otimes x_{n_{r-1}})\otimes [x_{n_{r}}, x_{n_{r+1}}]_{\mathfrak{g}}\otimes \alpha_{T}(x_{n_{i+2}}\otimes \cdots \otimes x_{n_{i+1}})) \\
        = & S(\mu^{i-1}\alpha_{T}(x_{n_{1}}\otimes \cdots)\otimes\alpha_{T}(x_{n_{r}})\otimes [x_{n_{r+1}}, x_{n_{r+2}}]_{\mathfrak{g}}\otimes \alpha_{T}(\cdots \otimes x_{n_{i+1}})) \\
        + & S(\mu^{2i-3}(x_{n_{1}}\otimes \cdots\otimes [[x_{n_{r+1}}, x_{n_{r+2}}]_{\mathfrak{g}}, \alpha_{T}(x_{n_{r}})]_{\mathfrak{g}} \otimes \cdots \otimes x_{n_{i+1}})) \\
        + & S(\mu^{i-1}\alpha_{T}(x_{n_{1}}\otimes \cdots)\otimes [x_{n_{r}}, x_{n_{r+2}}]_{\mathfrak{g}}\otimes \alpha_{T}(x_{n_{r+1}})\otimes \alpha_{T}(\cdots \otimes x_{n_{i+1}})) \\
        + & S(\mu^{2i-3}x_{n_{1}}\otimes \cdots\otimes x_{n_{r-1}}\otimes [\alpha_{T}(x_{n_{r+1}}), [x_{n_{r}}, x_{n_{r+2}}]_{\mathfrak{g}}]_{\mathfrak{g}} \otimes \cdots \otimes x_{n_{i+1}}) \\
        + & S(\mu^{i-1}\alpha_{T}(x_{n_{1}}\otimes \cdots\otimes x_{n_{r-1}} )\otimes \alpha_{T}(x_{n_{r+2}})\otimes [x_{n_{r}}, x_{n_{r+1}}]_{\mathfrak{g}}\\
        &\otimes \alpha_{T}(\cdots \otimes x_{n_{i+1}})) \\
        + & S(\mu^{2i-3}x_{n_{1}}\otimes \cdots\otimes x_{n_{r-1}}\otimes [[x_{n_{r}}, x_{n_{r+1}}]_{\mathfrak{g}}, \alpha_{T}(x_{n_{r+2}})]_{\mathfrak{g}} \otimes \cdots \otimes x_{n_{i+1}}) \\
        = & S(\mu^{i-1}\alpha_{T}(x_{n_{1}}\otimes \cdots \otimes x_{n_{r}})\otimes [x_{n_{r+1}}, x_{n_{r+2}}]_{\mathfrak{g}}\otimes \alpha_{T} (\cdots \otimes x_{n_{i+1}})) \\
        + & S(\mu^{i-1}\alpha_{T}(x_{n_{1}}\otimes \cdots)\otimes [x_{n_{r}}, x_{n_{r+2}}]_{\mathfrak{g}}\otimes \alpha_{T}(x_{n_{i+1}}\otimes \cdots \otimes x_{n_{i+1}})) \\
        + & S(\mu^{i-1}\alpha_{T}(x_{n_{1}}\otimes \cdots \otimes x_{n_{r+2}})\otimes [x_{n_{r}}, x_{n_{r+1}}]_{\mathfrak{g}}\otimes \alpha_{T} (\cdots \otimes x_{n_{i+1}}))
      \end{align*}
      using the hom-Jacobi identity. Thus we can obtain
      \begin{align*}
        u= &  S(x_{n_{1}}\otimes  \cdots \otimes x_{n_{r+2}}\otimes x_{n_{r+1}}\otimes x_{n_{r}} \otimes \cdots \otimes x_{n_{i+1}}) \\
        + & S(\mu^{i-1}\alpha_{T}(x_{n_{1}}\otimes \cdots \otimes x_{n_{r}})\otimes [x_{n_{r+1}}, x_{n_{r+2}}]_{\mathfrak{g}}\otimes \alpha_{T}(\cdots \otimes x_{n_{i+1}})) \\
        + & S(\mu^{i-1}\alpha_{T}(x_{n_{1}}\otimes \cdots)\otimes [x_{n_{r}}, x_{n_{r+2}}]_{\mathfrak{g}}\otimes \alpha_{T}(x_{n_{i+1}}\otimes \cdots \otimes x_{n_{i+1}})) \\
        + & S(\mu^{i-1}\alpha_{T}(x_{n_{1}}\otimes \cdots \otimes x_{n_{r+2}})\otimes [x_{n_{r}}, x_{n_{r+1}}]_{\mathfrak{g}}\otimes \alpha_{T}(\cdots \otimes x_{n_{i+1}})) \\
        = & v.
      \end{align*}
Now that $u=v$ in either cases, let $\mathfrak{x}\in J_{\mathfrak{g},\beta}\cap kW$. Then $S(\mathfrak{x})=\mathfrak{x}$ and $S(\mathfrak{x})=0$. Therefore $\mathfrak{x}=0$ and we get that $J_{\mathfrak{g},\beta}\cap kW=0$ which completes the proof.
\end{proof}

We are now ready to prove the Poincare-Birkhoff-Witt theorem for involutive color hom-Lie algebras in the second part of the next theorem.
\begin{thm}
Let $k$ be a field whose characteristic is not 2. Let $\mathfrak{g}:=(\mathfrak{g},[,]_{\mathfrak{g}},\beta_{\mathfrak{g}})$ be an involutive hom-Lie color algebra on $k$. Let $\theta:T(\mathfrak{g})\rightarrow T(\mathfrak{g})$ be as described in \eqref{teta}. Let $I$ be the hom-ideal of $T(\mathfrak{g})$ generated by the commutators defined in Theorem \ref{I}. Let $J$ be as defined in \eqref{J}. Then there is a well-ordered basis $X$ of $\mathfrak{g}$ such that for
$$W=W_{X}=\{ x_{i_{1}}\otimes \cdots \otimes x_{i_{n}}|i_{1}\geq \cdots \geq i_{n}, n\geq0\},$$
the following statements hold.
\begin{itemize}
  \item [(i)] $T(\mathfrak{g})=J\oplus k H,$
  \item [(ii)] $\theta(W)$ is a basis of $U(\mathfrak{g})$.
\end{itemize}
\end{thm}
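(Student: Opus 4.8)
The plan is to obtain both statements as corollaries of Theorem \ref{dec} and of the preceding theorem (the one asserting $\theta(I)=J$), so that the only genuinely new work is a careful choice of the basis $X$ together with some bookkeeping for the linear operator $\theta$.

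\emph{Part (i).} Since $\mathrm{char}\,k\neq 2$ and $\beta_{\mathfrak g}$ is an even involutive linear map, $\mathfrak g$ decomposes as a $\Gamma$-graded direct sum $\mathfrak g=\mathfrak g_{+}\oplus\mathfrak g_{-}$ of the $(+1)$- and $(-1)$-eigenspaces of $\beta_{\mathfrak g}$. Picking a well-ordered homogeneous basis of each summand and taking the union produces a well-ordered homogeneous basis $X=\{x_n\}$ of $\mathfrak g$ with $\beta_{\mathfrak g}(x_n)=\pm x_n$ for every $n$; in particular $\alpha_T$ carries every pure tensor in the $x_n$'s to $\pm 1$ times another such pure tensor, which is exactly the property of $X$ that is used in the proof of Theorem \ref{dec} (the sign changes introduced by $\alpha_T$ on basis vectors are harmless, being absorbed into the scalar coefficients). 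So Theorem \ref{dec} applies to this $X$. Taking $\mu=1$ there, one has $\mu^{n+m}=1$ for all $n,m\geq 0$, so the space $J_{\mu}$ of \eqref{J} coincides with the space $J$ of \eqref{J1}; Theorem \ref{dec} then yields the linear decomposition $T(\mathfrak g)=J\oplus kW$, which is (i).

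\emph{Part (ii).} Recall from Theorem \ref{I} that $U(\mathfrak g)=T(\mathfrak g)/I$ with quotient map $\pi$, and from the preceding theorem that $\theta(I)=J$. The operator $\theta$ is a linear bijection of $T(\mathfrak g)$, being obtained by applying in each tensor slot either the bijection $\alpha_T$ or the identity; moreover $\theta^2=\mathrm{id}$ because $\alpha_T^2=\mathrm{id}$, so $\theta(J)=\theta(\theta(I))=I$. Applying the bijection $\theta$ to the decomposition of part (i) gives
$$T(\mathfrak g)=\theta(J)\oplus k\,\theta(W)=I\oplus k\,\theta(W).$$
Thus $k\,\theta(W)$ is a linear complement of $I=\ker\pi$ in $T(\mathfrak g)$, so $\pi$ restricts to a linear isomorphism from $k\,\theta(W)$ onto $U(\mathfrak g)$; equivalently $\pi(\theta(W))$ is a basis of $U(\mathfrak g)$, which is (ii).

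\emph{Main obstacle.} The hard part — the well-definedness of the operator $S$ from \eqref{model} and the resulting linear independence — has already been dispatched inside Theorem \ref{dec}. What needs care here is the very first step: exhibiting a well-ordered basis of $\mathfrak g$ compatible with $\beta_{\mathfrak g}$ and verifying that it meets the substance of the hypothesis ``$\beta_{\mathfrak g}(X)=X$'' of Theorem \ref{dec}, since an eigenbasis only satisfies $\beta_{\mathfrak g}(x)=\pm x$ (this is where $\mathrm{char}\,k\neq 2$ enters, to diagonalise $\beta_{\mathfrak g}$), together with the small but essential observation that $\theta$ is an involution, so that it is $\theta(W)$ — and not $\theta^{-1}(W)$ — that indexes the Poincar\'e--Birkhoff--Witt basis of $U(\mathfrak g)$.
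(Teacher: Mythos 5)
Your part (ii) is essentially the argument the paper leaves implicit ($\theta^{2}=\mathrm{id}$ since $\beta_{\mathfrak{g}}^{2}=\mathrm{id}$, hence $\theta(J)=I$ by the preceding theorem, and transporting the decomposition of (i) through the bijection $\theta$ gives $T(\mathfrak{g})=I\oplus k\,\theta(W)$, so $\pi(\theta(W))$ is a basis of $U(\mathfrak{g})$), and that part is fine. The gap is in part (i): you apply Theorem \ref{dec} to an eigenbasis of $\beta_{\mathfrak{g}}$, but an eigenbasis only satisfies $\beta_{\mathfrak{g}}(x)=\pm x$, so the hypothesis $\beta_{\mathfrak{g}}(X)=X$ of Theorem \ref{dec} fails as soon as $\mathfrak{g}_{-}\neq 0$. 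Your parenthetical claim that ``the sign changes introduced by $\alpha_T$ are harmless, being absorbed into the scalar coefficients'' is precisely the nontrivial point: the proof of Theorem \ref{dec} invokes $\beta_{\mathfrak{g}}(X)=X$ to conclude that $\alpha_{T}(x_{n_r}),\alpha_{T}(x_{n_{r+1}}),\dots$ are again elements of $X$, which is what allows the recursion \eqref{model} defining $S$ (and the case analysis in its well-definedness check) to be applied to the images under $\alpha_T$. To use an eigenbasis you would have to re-prove a variant of Theorem \ref{dec} under the weaker hypothesis $\beta_{\mathfrak{g}}(x)=\pm x$, tracking signs through the whole well-definedness argument for $S$; you assert this adaptation but do not carry it out, so as written you are citing a theorem whose hypothesis your $X$ does not satisfy.

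The paper avoids this entirely by constructing a basis that $\beta_{\mathfrak{g}}$ genuinely permutes: write $\mathfrak{g}=\mathfrak{g}_{+}\oplus\mathfrak{g}_{-}$ (here $\mathrm{char}\,k\neq 2$ is used), choose bases $B_{\pm}$, and if $|B_{+}|\geq |B_{-}|$ fix an injection $\iota:B_{-}\rightarrow B_{+}$ and take $X=\{\iota(x)+x,\ \iota(x)-x\mid x\in B_{-}\}\cup (B_{+}\setminus\iota(B_{-}))$; then $\beta_{\mathfrak{g}}$ swaps $\iota(x)+x$ with $\iota(x)-x$ and fixes the remaining vectors, so $\beta_{\mathfrak{g}}(X)=X$ on the nose and Theorem \ref{dec} applies with $\mu=1$. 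If $|B_{+}|<|B_{-}|$ this pairing is impossible for $\beta_{\mathfrak{g}}$, and the paper instead works with $-\beta_{\mathfrak{g}}$ and takes $\mu=-1$ in \eqref{J}. The existence of the parameter $\mu$ and of this second case is exactly the symptom that the ``eigenbasis with $\mu=1$'' shortcut is not available without further proof; note also that in the second case your argument for (ii) would need $\theta(I)=J_{-1}$, whereas the preceding theorem only establishes $\theta(I)=J$ for the $\mu=1$ space \eqref{J1}, so the $\mu=1$-only treatment does not cover the situation the paper actually has to handle. Either supply the sign-tracking re-proof of Theorem \ref{dec} under the hypothesis $\beta_{\mathfrak{g}}(x)=\pm x$, or follow the paper's construction of a $\beta_{\mathfrak{g}}$-stable basis with the two cases $\mu=\pm 1$.
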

\begin{proof}
\begin{itemize}
  \item [(i)] Let $\mathfrak{g}_{+}$ and $\mathfrak{g}_{-}$ be the eigenspaces of 1 and -1 of $\mathfrak{g}$, respectively. Then we have the decomposition
      $$\mathfrak{g}=\mathfrak{g}_{+}\oplus \mathfrak{g}_{-}.$$

      Let $B_{+}$ and $B_{-}$ be some basis for $\mathfrak{g}_{+}$ and $\mathfrak{g}_{-}$, respectively. There are two different cases:\\
      If the cardinality of $B_{+}$ is more than the cardinality of $B_{-}$, fix an injection $\iota:B_{-}\rightarrow B_{+}$. Then the following set is a basis of $\mathfrak{g}$
      $$X:=\{\iota(x)+x,\iota(x)-x|x\in B_{-}\} \cup(B_{+}\setminus B_{-}),$$
      and $\beta_{\mathfrak{g}}(X)=X$. Let $W$ be defined with a well order on $X$ and choose $\mu=1$ in \eqref{J}. Then we have
      $$T(\mathfrak{g})= J_{\mathfrak{g},\beta,1}\oplus kW.$$
      If the cardinality of $B_{+}$ is not more than the cardinality of $B_{+}$, it suffices to assume $\gamma:=-\beta_{\mathfrak{g}}$ and take $\mu=-1$ as it is like the last case.
  \item [(ii)] Follows directly from Theorem.\ref{dec} and $(i)$.
\end{itemize}
\end{proof}
\section{Hom-Lie Superalgebras Case}

The study of hom-Lie superalgebras has been widely in the center of interest these last years. The motivation came from the generalization of Lie superalgebras, or in some cases, the generalization of hom-Lie algebras. However, here we deal with them as a special case of color hom-Lie algebras. One can simply put $\Gamma=\mathds{Z}_{2}$ in Definition \eqref{HLCAD} and define $\varepsilon$ in such a way that $\varepsilon(x,y)=(-1)^{|x||y|}$ to get the following definition \cite{COHS, COH,LSGradedquasiLiealg,YN,CCH,spl}.

\begin{defn}
A hom-Lie superalgebra is a triple $(\mathfrak{g},[,],\alpha)$ consisting of a superspace $\mathfrak{g}$, a bilinear map $[,]:\mathfrak{g}\times \mathfrak{g} \rightarrow \mathfrak{g}$ and a superspace homomorphism $\alpha:\mathfrak{g}\rightarrow \mathfrak{g}$, both of them of degree zero satisfying
\begin{itemize}
  \item [1.]$[x,y]=-(-1)^{|x||y|}[y,x],$
  \item [2.]$(-1)^{|x||z|}[\alpha(x),[y,z]]+(-1)^{|y||x|}[\alpha(y),[z,x]]+(-1)^{|z||y|}[\alpha(z),[x,y]]=0,$
\end{itemize}
for all homogeneous elements $x,y,z \in \mathfrak{g}$.
\end{defn}
In particular, one can easily recall from the first section, the notions of a multiplicative hom-Lie superalgebra, a morphism of hom-Lie superalgebras and a hom-associative superalgebra. Moreover, a hom-associative superalgebra or a hom-Lie superalgebra is said to be involutive if $\alpha^{2}=id$.

Again, as in the previous cases, a hom-associative superalgebra $(V,\mu,\alpha)$ gives a hom-Lie superalgebra by antisymmetrization. We denote this hom-Lie superalgebra again by $(A,[,]_{A},\beta_{A})$, where $\beta_{A}=\alpha$, $[x,y]_{A}=xy-yx$, for all $x,y\in A$.

Let $(V,\alpha_{V})$ be an involutive hom-module. A free involutive hom-associative color algebra on $V$ is an involutive hom-associative super algebra $(F(V),*,\alpha_{F})$ together with a morphism of hom-modules $j_{V}:(V,\alpha_{V})\rightarrow (F(V),\alpha_{F})$ with the property that, for any involutive hom-associative superalgebra $(A,.,\alpha_{A})$ together with a morphism $f:(V,\alpha_{V})\rightarrow (A,\alpha_{A})$ of hom-modules, there is a unique morphism $\overline{f}:(F(V),*,\alpha_{F})\rightarrow (A,.,\alpha_{A})$ of hom-associative superalgebras such that $f=\overline{f}\circ j_{V}$.

The definition of the universal enveloping algebra as can be predicted is just a modification of the Definition \ref{uni}.
\begin{defn}\label{uniS}
Let $(\mathfrak{g},[,],\alpha)$ be a hom-Lie superalgebra. A universal enveloping hom-associative superalgebra of $\mathfrak{g}$ is a hom associative superalgebra
$$U(\mathfrak{g}):=(U(\mathfrak{g}),\mu_{U},\alpha_{U}),$$
together with a morphism $\varphi_{\mathfrak{g}}:\mathfrak{g}\rightarrow U(\mathfrak{g})$ of hom-Lie superalgebras such that for any hom-associative superalgebra $(A,\mu,\alpha_{A})$ and any hom-Lie superalgebra morphism $\phi:(\mathfrak{g},[,]_{\mathfrak{g}},\beta_{\mathfrak{g}})$, there exists a unique morphism $\bar{\phi}:U(\mathfrak{g}\rightarrow A)$ of hom associative superalgebras such that $\bar{\phi}\circ \varphi_{\mathfrak{g}}=\phi$.
\end{defn}

The following lemma shows an easy way to construct the universal algebra when we have an involutive hom-Lie superalgebra.

\begin{lem}\label{3tayiS}
Let $(\mathfrak{g},[,]_{\mathfrak{g}},\beta_{\mathfrak{g}})$ be an inovolutive hom-Lie superalgebra.
\begin{itemize}
  \item [(i)] Let $(A,\cdot,\alpha_{A})$ be a hom-associative algebra, $f:(\mathfrak{g},[,]_{\mathfrak{g}}, \beta_{\mathfrak{g}})\rightarrow (A,[,]_{A},\beta_{A})$ be a morphism of hom-Lie superalgebras and $B$ be the hom-associative subsuperalgebra of $A$ generated by $f(\mathfrak{g})$. Then $B$ is involutive.
  \item [(ii)] The universal enveloping hom-associative algebra $(U(\mathfrak{g}),\varphi_{\mathfrak{g}})$ of $(\mathfrak{g},[,]_{\mathfrak{g}},\beta_{\mathfrak{g}})$ is involutive.
  \item [(iii)] In order to verify the universal property of $(U(\mathfrak{g}),\varphi_{\mathfrak{g}})$ in Definition \ref{uniS}, we only need to consider involutive hom-associative algebras $A:=(A,\cdot_{A},\alpha_{A})$.
\end{itemize}
\end{lem}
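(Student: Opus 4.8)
The plan is to transport the proof of Lemma \ref{3tayi} essentially verbatim to the $\mathbb{Z}_{2}$-graded setting: a hom-Lie superalgebra is precisely a color hom-Lie algebra with $\Gamma=\mathds{Z}_{2}$ and commutation factor $\varepsilon(x,y)=(-1)^{|x||y|}$, the super-analogue of Theorem \ref{I} supplies a universal enveloping hom-associative superalgebra $U(\mathfrak{g})=T(\mathfrak{g})/I$ which is generated by $\varphi_{\mathfrak{g}}(\mathfrak{g})$, and all the manipulations used below only involve the multiplicativity and involutivity of the structure maps, never the specific commutation factor. So I would first state that reduction, then carry out the three parts exactly as in Lemma \ref{3tayi}.

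For \textit{(i)}, I would introduce $S=\{x\in A\mid \alpha_{A}^{2}(x)=x\}$ and check it is a hom-associative subsuperalgebra of $A$: it is a $\mathbf{k}$-submodule; it is closed under multiplication since $\alpha_{A}$ is multiplicative, so $\alpha_{A}^{2}(xy)=\alpha_{A}^{2}(x)\alpha_{A}^{2}(y)=xy$ for $x,y\in S$; and it is closed under $\alpha_{A}$ since $\alpha_{A}^{2}(\alpha_{A}(x))=\alpha_{A}(\alpha_{A}^{2}(x))=\alpha_{A}(x)$. By construction $S$ is involutive. Since $f$ is a morphism of hom-Lie superalgebras it intertwines $\beta_{\mathfrak{g}}$ and $\beta_{A}=\alpha_{A}$, and $\beta_{\mathfrak{g}}^{2}=\id$ gives $\alpha_{A}^{2}(f(x))=f(\beta_{\mathfrak{g}}^{2}(x))=f(x)$, so $f(\mathfrak{g})\subseteq S$; as $B$ is the smallest hom-associative subsuperalgebra containing $f(\mathfrak{g})$ and $S$ is such a subalgebra, $B\subseteq S$, hence $B$ is involutive. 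Part \textit{(ii)} then follows immediately by applying \textit{(i)} to $A:=U(\mathfrak{g})$, $f:=\varphi_{\mathfrak{g}}$, $B:=U(\mathfrak{g})$, using that $U(\mathfrak{g})$ is generated as a hom-associative superalgebra by $\varphi_{\mathfrak{g}}(\mathfrak{g})$ and that $\varphi_{\mathfrak{g}}$ is a morphism of hom-Lie superalgebras.

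For \textit{(iii)} I would show that, granting the universal property of $(U(\mathfrak{g}),\varphi_{\mathfrak{g}})$ only against involutive hom-associative superalgebras, it holds against all of them. Given a hom-associative superalgebra $(A,\cdot,\alpha_{A})$ and a morphism $\psi:(\mathfrak{g},[,]_{\mathfrak{g}},\beta_{\mathfrak{g}})\rightarrow(A,[,]_{A},\beta_{A})$ of hom-Lie superalgebras, the computation in \textit{(i)} shows $\mathrm{im}(\psi)\subseteq S$, so $\psi=i\circ\psi_{S}$ for $\psi_{S}:\mathfrak{g}\rightarrow S$ and the inclusion $i:S\hookrightarrow A$. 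The assumed universal property yields a unique morphism $\overline{\psi_{S}}:U(\mathfrak{g})\rightarrow S$ of hom-associative superalgebras with $\overline{\psi_{S}}\circ\varphi_{\mathfrak{g}}=\psi_{S}$, and $\overline{\psi}:=i\circ\overline{\psi_{S}}$ satisfies $\overline{\psi}\circ\varphi_{\mathfrak{g}}=\psi$. For uniqueness, any other $\overline{\psi}'$ with $\overline{\psi}'\circ\varphi_{\mathfrak{g}}=\psi$ has involutive image by \textit{(ii)}, hence factors as $i\circ\overline{\psi}'_{S}$ with $\overline{\psi}'_{S}\circ\varphi_{\mathfrak{g}}=\psi_{S}$; the uniqueness clause of the assumed universal property forces $\overline{\psi}'_{S}=\overline{\psi_{S}}$, so $\overline{\psi}'=\overline{\psi}$.

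I do not expect a genuine obstacle here: the argument is pure bookkeeping, and the only point requiring care is that "subsuperalgebra generated by $f(\mathfrak{g})$" and "$U(\mathfrak{g})$ generated by $\varphi_{\mathfrak{g}}(\mathfrak{g})$" must be read in the hom-associative sense, i.e. closure under both the product and the twisting map $\alpha$ — which is exactly what makes the inclusion $B\subseteq S$ go through. Once the super-version of Theorem \ref{I} is available, the proof is a line-by-line copy of that of Lemma \ref{3tayi}, so in the write-up I would simply indicate the translation and refer back rather than repeat the computations.
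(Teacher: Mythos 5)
Your proposal is correct and coincides with the paper's intent: the paper states Lemma \ref{3tayiS} without proof precisely because it is the $\Gamma=\mathds{Z}_{2}$, $\varepsilon(x,y)=(-1)^{|x||y|}$ specialization of Lemma \ref{3tayi}, whose proof you transcribe faithfully. In fact your write-up is slightly more complete than the paper's own argument for Lemma \ref{3tayi}(i), which breaks off after ``Since $f$ is a morphism of''; your completion ($\alpha_{A}^{2}(f(x))=f(\beta_{\mathfrak{g}}^{2}(x))=f(x)$, hence $f(\mathfrak{g})\subseteq S$ and $B\subseteq S$) is exactly the intended conclusion.
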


We can now, give the construction of the universal enveloping hom-associative superalgebra of an involutive hom-Lie superalgebra.

\begin{thm}\label{IS}
Let $\mathfrak{g}:= (\mathfrak{g}, [,]_{\mathfrak{g}},\beta_{\mathfrak{g}})$ be an involutive hom-Lie superalgebra. Let $$T(\mathfrak{g}):=(T(\mathfrak{g}),\odot,\alpha_{T})$$ be the free hom-associative algebra on the hom-module underlying $\mathfrak{g}$. Let $I$ be the hom-ideal of $T(\mathfrak{g})$ generated by the set
\begin{equation}\label{21S}
  \{a\otimes b -(-1)^{|a||b|} b\otimes a -[a,b]\}
\end{equation}
and let
$$U(\mathfrak{g})=\frac{T(\mathfrak{g})}{I}$$
be the quotient hom-associative algebra. Let $\psi$ be the composition of the natural inclusion $i:\mathfrak{g}\rightarrow T(\mathfrak{g})$ with the quotient map $\pi:T(\mathfrak{g})\rightarrow U(\mathfrak{g})$. Then $(U(\mathfrak{g}),\psi)$ is a universal enveloping hom-associative algebra of $\mathfrak{g}$. Also, the universal
enveloping hom-associative algebra of $\mathfrak{g}$ is unique up to isomorphism.
\end{thm}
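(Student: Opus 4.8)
The plan is to mimic the proof of Theorem \ref{I} essentially verbatim, since a hom-Lie superalgebra is precisely a color hom-Lie algebra for the grading group $\Gamma=\mathbb{Z}_{2}$ and commutation factor $\varepsilon(x,y)=(-1)^{|x||y|}$, so that the defining relations \eqref{21S} are exactly the $\mathbb{Z}_{2}$-graded specialization of \eqref{21}. First I would record that the free involutive hom-associative \emph{super}algebra on the hom-module underlying $\mathfrak{g}$ is again $T(\mathfrak{g})=(T(\mathfrak{g}),\odot,\alpha_{T})$ of Theorem \ref{sakht}: the tensor algebra inherits a $\mathbb{Z}_{2}$-grading from that of $\mathfrak{g}$, both the multiplication $\odot$ and the twist $\alpha_{T}$ are even, and the universal property in Theorem \ref{sakht}(ii) restricts without change to the category of involutive hom-associative superalgebras and their morphisms.

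Next, writing $*$ for the multiplication on $U(\mathfrak{g})=T(\mathfrak{g})/I$ and $\psi=\pi\circ i$, I would check that $\psi$ is a morphism of hom-Lie superalgebras. It is a morphism of hom-modules as a composite of such, and for homogeneous $x,y\in\mathfrak{g}$ the computation
$$\psi([x,y]_{\mathfrak{g}})=\pi(x\otimes y-(-1)^{|x||y|}y\otimes x)=\psi(x)*\psi(y)-(-1)^{|x||y|}\psi(y)*\psi(x)=[\psi(x),\psi(y)]$$
holds because the generator $x\otimes y-(-1)^{|x||y|}y\otimes x-[x,y]_{\mathfrak{g}}$ lies in $I=\ker\pi$ and because $x\otimes y=x\odot y$ for $x,y\in\mathfrak{g}$, exactly as in the color case. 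Then, invoking the superalgebra analogue Lemma \ref{3tayiS}(iii) of Lemma \ref{3tayi}(iii), it suffices to verify the universal property against involutive hom-associative superalgebras $A=(A,\cdot_{A},\alpha_{A})$. Given a morphism $\xi$ of hom-Lie superalgebras from $\mathfrak{g}$ to $(A,[,]_{A},\beta_{A})$, Theorem \ref{sakht} produces a morphism $\tilde{\xi}:T(\mathfrak{g})\to A$ of hom-associative superalgebras with $\tilde{\xi}\circ i_{\mathfrak{g}}=\xi$; the same chain of identities as in the proof of Theorem \ref{I}, with $(-1)^{|x||y|}$ in place of $\varepsilon(x,y)$, gives $\tilde{\xi}(x\otimes y-(-1)^{|x||y|}y\otimes x)=[\xi(x),\xi(y)]_{A}=\tilde{\xi}([x,y]_{A})$, so $I\subseteq\ker\tilde{\xi}$ and $\tilde{\xi}$ factors through a morphism $\bar{\xi}:U(\mathfrak{g})\to A$ with $\bar{\xi}\circ\psi=\xi$.

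For the uniqueness of $\bar{\xi}$, I would argue by induction on the tensor degree $i$ that $\bar{\xi}\circ\pi$ and any competitor $\bar{\xi}'\circ\pi$ coincide on $\mathfrak{g}^{\otimes i}$: the base case $i=1$ is immediate from $\bar{\xi}\circ\psi=\xi=\bar{\xi}'\circ\psi$, and the inductive step uses $\mathfrak{a}'\otimes a_{i+1}=\mathfrak{a}'\odot a_{i+1}$ together with the multiplicativity of $\bar{\xi}$ and $\bar{\xi}'$, precisely as before. The uniqueness of $U(\mathfrak{g})$ up to isomorphism is then the usual formal consequence of the universal property: a second universal pair $(U(\mathfrak{g})_{1},\psi_{1})$ yields comparison morphisms $f,f_{1}$ with $f\circ\psi=\psi_{1}$, $f_{1}\circ\psi_{1}=\psi$, whence $f_{1}\circ f$ and $\id$ both satisfy the defining property of $(U(\mathfrak{g}),\psi)$, forcing $f_{1}\circ f=\id$ and symmetrically $f\circ f_{1}=\id$. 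I do not expect a genuine obstacle here; the only point needing a little care is the bookkeeping of the quadratic sign $(-1)^{|x||y|}$ under $\alpha_{T}$ and under the identification of $\otimes$ with $\odot$ on $\mathfrak{g}$, but since $\alpha_{T}$ is even and involutive these signs behave exactly as the commutation factor $\varepsilon$ did in the proof of Theorem \ref{I}.
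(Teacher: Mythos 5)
Your proposal is correct and matches the paper's intent: the paper gives no separate proof of Theorem \ref{IS}, treating it as the specialization of Theorem \ref{I} to $\Gamma=\mathbb{Z}_{2}$ with $\varepsilon(x,y)=(-1)^{|x||y|}$, which is precisely the route you take by rerunning that proof with the sign factor in place of $\varepsilon$.
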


We can also have a Poincare-Birkhoff-Witt like type theorem for involutive hom-Lie superalgebras as a special case of color hom-Lie algebras, i.e. if $\mathfrak{g}$ is a Lie superalgebra with an ordered basis $X=\{x_{n}|n\in H\}$ where $H$ is a well totally ordered set, let $I$ be the ideal of the free associative algebra $T(\mathfrak{g})$ on $\mathfrak{g}$ which was given in Theorem \ref{IS}, so that $U(\mathfrak{g})$ is the universal enveloping algebra of $\mathfrak{g}$. We also suppose that $\mathfrak{g}$ is an involutive hom-Lie superalgerba with a basis $X=\{x_{n}|n\in \omega \}$ for a well ordered set $\omega$. Next theorem which is a combination of theorems in the previous section, will help us give the Poincare-Birkhoff-Witt theorem for involutive hom-Lie superalgebras.

\begin{thm}
Let $\mathfrak{g}:= (\mathfrak{g},[,]_{\mathfrak{g}},\beta_{\mathfrak{g}})$ be an involutive hom-Lie superalgebra such that $\beta_{\mathfrak{g}}(X)=X$. Let $\theta:T(\mathfrak{g})\rightarrow T(\mathfrak{g})$ be as defined in \eqref{teta}, $I$ be the hom-ideal of $T(\mathfrak{g})$ as defined in Theorem \ref{IS}, $W$ be like the one defined in \eqref{tfW} and let $\mu\in k$ be given. Moreover, let
\begin{multline} \label{JS} J_{\mu}:=
\sum_{n,m\geq0}\sum_{\substack {\mathfrak{a}\in \mathfrak{g}^{\otimes n}\\ \mathfrak{b}\in \mathfrak{g}^{\otimes m}}}\sum_{a,b\in \mathfrak{g}}(\mathfrak{a}\otimes (a\otimes b -(-1)^{|a||b|}b\otimes a)\otimes \mathfrak{b} \\
- \mu^{n+m} \alpha_{T}(\mathfrak{a})\otimes [a,b]_{\mathfrak{g}}\otimes \alpha_{T}(\mathfrak{b})),
\end{multline}
Then
\begin{itemize}
  \item [(i)]
%\begin{equation}\label{I1S}
$
I=  \sum_{n,m\geq0}\sum_{a,b\in \mathfrak{g}}(\mathfrak{g}^{\otimes n} \odot (a\otimes b-(-1)^{|a||b|}b\otimes a -[a,b]_{\mathfrak{g}}) )\odot \mathfrak{g}^{\otimes m}.
$
%\end{equation}
  \item [(ii)] $\theta(I)=J$
  \item [(iii)] We can have the linear decomposition $T(\mathfrak{g})=J_{\mu}\oplus k W$.
\end{itemize}
\end{thm}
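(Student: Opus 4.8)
The plan is to obtain all three parts as corollaries of the color hom-Lie algebra case proved in the previous section, by specializing $\Gamma = \mathds{Z}_2$ with the commutation factor $\varepsilon(x,y) = (-1)^{|x||y|}$. The point is that every definition and construction used in Theorems~\ref{I} and~\ref{dec} and the intervening propositions was phrased in terms of an abstract bicharacter $\varepsilon$ on an abstract grading group $\Gamma$, and one checks directly that $\varepsilon(x,y) = (-1)^{|x||y|}$ satisfies the three axioms of a commutation factor on $\mathds{Z}_2$: bimultiplicativity follows from $|x+y| = |x|+|y| \pmod 2$ and additivity of exponents, and $\varepsilon(x,y)\varepsilon(y,x) = (-1)^{2|x||y|} = 1$. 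Hence an involutive hom-Lie superalgebra is literally an involutive hom-Lie color algebra for this choice of $(\Gamma,\varepsilon)$, and the defining relation \eqref{21S} of $I$ coincides with \eqref{21} under the substitution $\varepsilon(a,b) = (-1)^{|a||b|}$, as does \eqref{JS} with \eqref{J}.

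For part~(i), I would simply invoke the color version: since $I$ is the hom-ideal generated by $\{a\otimes b - (-1)^{|a||b|}b\otimes a - [a,b]_{\mathfrak{g}}\}$, the identity
$$I = \sum_{n,m\geq0}\sum_{a,b\in\mathfrak{g}}(\mathfrak{g}^{\otimes n}\odot(a\otimes b-(-1)^{|a||b|}b\otimes a-[a,b]_{\mathfrak{g}}))\odot\mathfrak{g}^{\otimes m}$$
is exactly the statement of part~(i) of the corresponding color theorem with $\varepsilon$ specialized. The proof there used only the properties of $\odot$ and $\alpha_T$ listed before Lemma~\ref{4ta} (namely $\alpha_T^j(\mathfrak{g}^{\otimes i}) = \mathfrak{g}^{\otimes i}$, the formula $\mathfrak{g}^{\otimes r}\odot\mathfrak{g}^{\otimes s} = \mathfrak{g}^{\otimes r+s}$, hom-associativity, and multiplicativity of $\alpha_T$), none of which involves $\varepsilon$ beyond its appearance as a scalar, so nothing needs to be redone. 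Part~(ii), $\theta(I) = J$, follows the same way from the color computation, which relied on Lemma~\ref{4ta}(iv) and the bijectivity of $\theta$ and $\alpha_T$; again $\varepsilon$ enters only as a scalar coefficient carried along through the tensor manipulations.

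For part~(iii), the linear decomposition $T(\mathfrak{g}) = J_\mu \oplus kW$, I would appeal to Theorem~\ref{dec} with $\Gamma = \mathds{Z}_2$. Its proof has two halves: the spanning part, $\mathfrak{g}^{\otimes i} \subseteq J_\mu \oplus \sum_{1\leq q\leq i} kW^{(q)}$, proved by a double induction on degree $i$ and on the index $d$ of a pure tensor, using that swapping two basis vectors changes a tensor by an element of $J_\mu$ built from a bracket term; and the intersection part, $J_\mu \cap kW = 0$, proved by constructing the straightening operator $S$ satisfying \eqref{model} and checking it is well-defined independently of the choice of adjacent inversion, which splits into Case~1 ($|r-r'|\geq 2$, handled by skew-symmetry of the bracket) and Case~2 ($|r-r'|=1$, handled by the hom-Jacobi identity). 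Since an involutive hom-Lie superalgebra satisfies $\varepsilon$-skew symmetry and the $\varepsilon$-hom-Jacobi identity for $\varepsilon(x,y) = (-1)^{|x||y|}$, both case analyses go through verbatim with this scalar substituted. The one genuine subtlety — and the step I expect to be the main obstacle — is the requirement $\beta_{\mathfrak{g}}(X) = X$ for a well-ordered basis $X$: one must run the argument from part~(i) of the final theorem of the previous section, splitting $\mathfrak{g} = \mathfrak{g}_+ \oplus \mathfrak{g}_-$ into $\pm1$-eigenspaces of the involution $\beta_{\mathfrak{g}}$ and building a $\beta_{\mathfrak{g}}$-stable basis from bases $B_\pm$ via an injection between them (choosing $\mu = 1$ or $\mu = -1$ according to which eigenspace is larger, possibly after replacing $\beta_{\mathfrak{g}}$ by $-\beta_{\mathfrak{g}}$). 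Here one must also be careful that the characteristic is not $2$, so that $\{x+y,\, x-y\}$ is a basis of a two-dimensional space whenever $\{x,y\}$ is; this is why the hypothesis $\operatorname{char} k \neq 2$ is needed. Granting that construction, $T(\mathfrak{g}) = J_\mu \oplus kW$ is immediate and part~(iii) is proved.
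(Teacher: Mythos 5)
Your proposal is correct and follows essentially the same route as the paper, which gives no separate proof of this theorem but presents it exactly as the specialization of the color hom-Lie results (parts (i)--(ii) of the theorem on $\theta(I)=J$ and Theorem~\ref{dec}) to $\Gamma=\mathds{Z}_2$ with $\varepsilon(x,y)=(-1)^{|x||y|}$, after checking this is a commutation factor. The only remark is that $\beta_{\mathfrak{g}}(X)=X$ is a hypothesis of this theorem, so the eigenspace construction of a $\beta_{\mathfrak{g}}$-stable basis (and the $\operatorname{char}k\neq 2$ assumption) that you flag as the main obstacle actually belongs to the subsequent PBW theorem rather than to this statement.
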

Finally, we give the Poincare-Birkhoff-Witt theorem for involutive hom-Lie superalgebras.
\begin{thm}
Let $k$ be a field whose characteristic is not 2. Let $\mathfrak{g}:=(\mathfrak{g},[,]_{\mathfrak{g}},\beta_{\mathfrak{g}})$ be an involutive hom-Lie superalgebra on $k$. Let $\theta:T(\mathfrak{g})\rightarrow T(\mathfrak{g})$ be as described in \eqref{teta}. Let $I$ be the hom-ideal of $T(\mathfrak{g})$ generated by the commutators defined in Theorem \ref{IS}. Let $J$ be as defined in \eqref{JS}. Then there is a well-ordered basis $X$ of $\mathfrak{g}$ such that for
$$W=W_{X}=\{ x_{i_{1}}\otimes \cdots \otimes x_{i_{n}}|i_{1}\geq \cdots \geq i_{n}, n\geq0\}$$
the following statements hold.
\begin{itemize}
  \item [(i)] $T(\mathfrak{g})=J\oplus k H,$
  \item [(ii)] $\theta(W)$ is a basis of $U(\mathfrak{g})$.
\end{itemize}
\end{thm}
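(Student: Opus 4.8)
\noindent\emph{Sketch of proof.} The plan is to derive the statement from the color case developed above, using that a hom-Lie superalgebra is precisely a color hom-Lie algebra for the grading group $\Gamma=\mathds{Z}_{2}$ with commutation factor $\varepsilon(x,y)=(-1)^{|x||y|}$; one checks immediately that this $\varepsilon$ satisfies the three axioms of a commutation factor. Under this identification the hom-tensor algebra $T(\mathfrak{g})$, the hom-ideal $I$, the quotient $U(\mathfrak{g})=T(\mathfrak{g})/I$, the operator $\theta$ of \eqref{teta}, and the subspaces $J$, $J_{\mu}$, $W$ are exactly the ones introduced above, and the preceding theorem (all three of its parts) becomes available for $\mathfrak{g}$ as soon as a homogeneous basis $X$ with $\beta_{\mathfrak{g}}(X)=X$ has been produced. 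I would accordingly split the argument into three steps: (a) constructing $X$; (b) deducing (i); (c) deducing (ii).

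For step (a), since $\beta_{\mathfrak{g}}$ is an even involution it preserves each homogeneous component $\mathfrak{g}_{i}$, $i\in\mathds{Z}_{2}$, and splits it into its $(+1)$- and $(-1)$-eigenspaces $\mathfrak{g}_{i}^{+}$, $\mathfrak{g}_{i}^{-}$. I would choose homogeneous bases $B_{i}^{+}$, $B_{i}^{-}$ of these, and within each parity $i$ an injection $\iota$ from the smaller of the two into the larger; replacing each matched pair $\{\iota(x),x\}$ by $\{\iota(x)+x,\iota(x)-x\}$ --- legitimate because $\operatorname{char}k\neq 2$ --- and keeping the unmatched vectors produces a homogeneous basis. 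When in each parity it is the $(+1)$-eigenspace basis that is the larger, $\beta_{\mathfrak{g}}$ fixes the unmatched vectors and interchanges $\iota(x)+x$ with $\iota(x)-x$, so the resulting set $X$ satisfies $\beta_{\mathfrak{g}}(X)=X$; in the opposite case one argues, exactly as in the color proof, with $-\beta_{\mathfrak{g}}$ in place of $\beta_{\mathfrak{g}}$ --- again an even involution turning $\mathfrak{g}$ into an involutive hom-Lie superalgebra --- and with $\mu=-1$ rather than $\mu=1$ below. Fixing a well order on $X$ gives $W=W_{X}$ as in the statement.

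For step (b), part (iii) of the preceding theorem applied with this $\mu$ yields $T(\mathfrak{g})=J_{\mu}\oplus kW$, which is the decomposition of (i) once $J$ is read as $J_{\mu}$ for the chosen sign $\mu\in\{1,-1\}$. For step (c), I would use that $\theta$ is a linear involution of $T(\mathfrak{g})$ --- it applies $\beta_{\mathfrak{g}}$ in the odd tensor slots beyond the first and the identity in the others, so $\theta^{2}$ applies $\beta_{\mathfrak{g}}^{2}=\operatorname{id}$ --- together with part (ii) of the preceding theorem, $\theta(I)=J$, whence $\theta(J)=\theta^{2}(I)=I$. Applying the bijection $\theta$ to $T(\mathfrak{g})=J\oplus kW$ then gives $T(\mathfrak{g})=\theta(J)\oplus k\theta(W)=I\oplus k\theta(W)$, so $\theta(W)$ is linearly independent and spans a complement of $I$; hence the quotient map $T(\mathfrak{g})\to U(\mathfrak{g})$ carries $\theta(W)$ bijectively onto a basis of $U(\mathfrak{g})$, which is (ii). (In the $\mu=-1$ case one runs this last step for the structure map $-\beta_{\mathfrak{g}}$ and transports the conclusion back, using that $\theta$ for $-\beta_{\mathfrak{g}}$ agrees with $\theta$ for $\beta_{\mathfrak{g}}$ up to signs on pure tensors.)

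The genuinely delicate point --- inherited from the color case rather than special to superalgebras --- is step (a): one must match the two eigenspaces of the involution while staying inside a single $\mathds{Z}_{2}$-degree, so that the vectors $\iota(x)\pm x$ remain homogeneous, and one must keep track of the cardinality comparison that forces the choice between $(\beta_{\mathfrak{g}},\mu=1)$ and $(-\beta_{\mathfrak{g}},\mu=-1)$; reconciling these two cases --- ultimately the identity relating $J_{-1}$ for $\beta_{\mathfrak{g}}$ to $J_{1}$ for $-\beta_{\mathfrak{g}}$ --- is the one place where the bookkeeping needs care. Everything else (that $\varepsilon$ is a commutation factor, that $\theta^{2}=\operatorname{id}$, that $J=J_{\mu}$) is routine, and all the structural work has already been carried out in the color setting.
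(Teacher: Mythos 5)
Your proposal is correct and takes essentially the same route as the paper: the super case is read as the color case with $\Gamma=\mathds{Z}_{2}$ and $\varepsilon(x,y)=(-1)^{|x||y|}$, a $\beta_{\mathfrak{g}}$-stable well-ordered (homogeneous) basis is produced from the $\pm1$-eigenspace decomposition using $\operatorname{char}k\neq 2$, part (i) is the decomposition $T(\mathfrak{g})=J_{\mu}\oplus kW$ with $\mu=\pm1$ from the preceding theorem, and part (ii) follows from $\theta(I)=J$ together with the bijectivity (involutivity) of $\theta$ --- a step you spell out where the paper's color-case proof only says it ``follows directly''. The one caveat, inherited from (and if anything treated more carefully than in) the paper's own argument, is the eigenspace-matching case split: your two cases do not literally cover the mixed situation where one parity has larger $+1$-eigenspace and the other larger $-1$-eigenspace, but the paper's proof contains the same lacuna, so this is not a deviation from its approach.
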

\section{Acknowledgments}
This paper is supported by grant no. 92grd1m82582 of Shiraz university, Shiraz, Iran. A. Armakan is grateful to Mathematics and Applied Mathematics research environment MAM, Division of Applied Mathematics, School of Education, Culture and Communication at M{\"a}lardalen University, V{\"a}ster{\aa}s, Sweden for creating excellent research environment during his visit from September 2016 to March 2017.

\end{document}